\pgfplotsset{
	table/search path={./image}
}
\pgfplotsset{compat=newest}
\newcommand{\matlab}{MATLAB\textsuperscript{\textregistered}}
\pgfplotsset{
        colormap={parula}{
            rgb255=(53,42,135)
            rgb255=(15,92,221)
            rgb255=(18,125,216)
            rgb255=(7,156,207)
            rgb255=(21,177,180)
            rgb255=(89,189,140)
            rgb255=(165,190,107)
            rgb255=(225,185,82)
            rgb255=(252,206,46)
            rgb255=(249,251,14)
        },
}
\newtheorem{assumption}[theorem]{Assumption}
\newcommand{\N}{\ensuremath\mathbb{N}}
\newcommand{\Z}{\ensuremath\mathbb{Z}}
\newcommand{\R}{\ensuremath\mathbb{R}}
\newcommand{\dt}{\,\mathrm{d}t}
\newcommand{\dx}{\,\mathrm{d}x}
\DeclareMathOperator{\diag}{diag}
\DeclareMathOperator{\sech}{sech}
\newcommand{\state}{z}
\newcommand{\timeInt}{\ensuremath\mathbb{T}}
\newcommand{\finalTime}{T}
\newcommand{\param}{\mu}
\newcommand{\paramSet}{\mathbb{M}}
\newcommand{\ROMstate}{\alpha}
\newcommand{\ROMdim}{r}
\newcommand{\coeff}{\alpha}
\newcommand{\coeffVector}{\boldsymbol{\coeff}}
\newcommand{\mode}{\varphi}
\newcommand{\modeVector}{\boldsymbol{\mode}}
\renewcommand{\path}{p}
\newcommand{\pathVector}{\boldsymbol{\path}}
\newcommand{\transform}{\mathcal{T}}
\newcommand{\BS}{\mathscr{X}}
\newcommand{\BSS}{\mathscr{Y}}
\newcommand{\PS}{\mathscr{P}}
\newcommand{\penaltyFunctional}{\Lambda}
\newcommand{\penaltyTerm}{\lambda}
\newcommand{\Admissible}{\mathcal{A}}
\begin{document}
\mainmatter              % start of a contribution
\title{Decomposition of flow data via gradient-based transport optimization}
\titlerunning{Decomposition of flow data via gradient-based transport optimization}  % abbreviated title (for running head)
%                                     also used for the TOC unless
%                                     \toctitle is used
%
\author{Felix Black\inst{1} \and Philipp Schulze\inst{1} \and Benjamin Unger \inst{2}}
\authorrunning{Felix Black et al.} % abbreviated author list (for running head)
%
%%%% list of authors for the TOC (use if author list has to be modified)
\tocauthor{Felix Black, Philipp Schulze, Benjamin Unger}
\institute{Institute of Mathematics, Technische Universität Berlin, Berlin, Germany\\
\email{\{black,pschulze\}@math.tu-berlin.de}
\and
Stuttgart Center for Simulation Science, Universität Stuttgart, Stuttgart, Germany\\
\email{benjamin.unger@simtech.uni-stuttgart.de}
}

\maketitle              % typeset the title of the contribution

\begin{abstract}
We study an optimization problem related to the approximation of given data by a linear combination of transformed modes. In the simplest case, the optimization problem reduces to a minimization problem well-studied in the context of proper orthogonal decomposition. Allowing transformed modes in the approximation renders this approach particularly useful to compress data with transported quantities, which are prevalent in many flow applications. We prove the existence of a solution to the infinite-dimensional optimization problem. Towards a numerical implementation, we compute the gradient of the cost functional and derive a suitable discretization in time and space. We demonstrate the theoretical findings with three challenging numerical examples.
\keywords{nonlinear model order reduction, transport-dominated phenomena, transformed modes, gradient-based optimization}
\end{abstract}

\section{Introduction}
\label{sec:intro}

Projection-based model order reduction (MOR) typically relies on the fact that the solution manifold of a (parametrized) differential equation can be approximately embedded in a low-dimensional linear subspace. The best subspace of a given dimension, where best is understood as the minimal worst-case approximation error, is characterized by the Kolmogorov $n$-widths \cite{Kol36}.  In practice, the minimizing subspace for the $n$-widths is difficult to compute. Instead, one relies on the proper orthogonal decomposition (POD) \cite{GubV17}, which is typically combined with a greedy-search within the parameter domain, to get an approximate solution. In more detail, for given parameters $\param_\sigma\in\paramSet$ ($\sigma=1,\ldots,\ell$), associated data samples $\state(t,x;\param_\sigma)$ with time variable $t\in\timeInt\vcentcolon=[0,\finalTime]$, space variable $x\in\Omega\subseteq \R^d$, and desired dimension $\ROMdim\in\N$ of the low-dimensional subspace, POD determines orthonormal basis functions of a low-dimensional subspace solving the minimization problem
\begin{equation}
	\label{eqn:PODminimization}
	\left\{\begin{aligned}
		&\min \frac{1}{2} \sum_{\sigma=1}^\ell \int_0^{\finalTime} \bigg\| \state(t,x;\param_\sigma) - \sum_{i=1}^\ROMdim \ROMstate_i(t;\param_\sigma) \varphi_i(x) \bigg\|^2 \dt\\
		&\mathrm{with}~\ROMstate_i(t;\param_\sigma)
		\vcentcolon=\langle\state(t,x;\param_\sigma),\varphi_i(x)\rangle\quad\text{for } i=1,\ldots,\ROMdim,\,
		\sigma=1,\ldots,\ell,\\
		& \mathrm{s.t.}~\langle\varphi_i,\varphi_j\rangle =
		\delta_{ij}\quad\text{for } i,j=1,\ldots,\ROMdim.
	\end{aligned}\right.
\end{equation}
If the $n$-widths, respectively the Hankel singular values for linear dynamical systems \cite{UngG19}, decay fast, then one can expect to construct an effective reduced-order model (ROM) able to approximate the full dynamics with a small approximation error.  Although one can show exponential decay for a large class of problems \cite{MadPT02b}, it is well-known, see for instance \cite{CagMS19,GreU19}, that the decay of the $n$-widths for flow problems is typically slow, thus conspiring against MOR.

To remedy this issue prevalent in transport-dominated phenomena, several strategies have been proposed in the literature. We refer to \cite{OhlR13,TadPQ15,RimML18,CagMS19,NonBRM19,Peh20,Tad20,Wel20,KraSR21} to name just a few.

One promising approach, introduced in \cite{ReiSSM18,SchRM19,LeeC19} and formalized in \cite{BlaSU20,BlaSU21a}, is to replace the POD minimization problem with
\begin{equation}
	\label{eqn:sPODminimization}
	\min \frac{1}{2} \sum_{\sigma=1}^\ell \int_0^{\finalTime} \bigg\| \state(t,x;\param_\sigma) - \sum_{i=1}^\ROMdim
	\ROMstate_i(t;\param_\sigma) \varphi_i(x-\path_i(t;\param_\sigma)) \bigg\|^2 \dt,
\end{equation}
thus accounting explicitly for the transportation of quantities throughout the spatial domain. 
Consequently, the linear subspace in the Kolmogorov $n$-widths is replaced with a subspace able to adapt itself to the solution over time, hence rendering this a nonlinear approach. 
Note that in contrast to the POD minimization problem \eqref{eqn:PODminimization}, we do not require the modes to be orthogonal to each other.
This is due to the fact that in the setting of \eqref{eqn:sPODminimization}, we would need to require orthogonality of $\varphi_i(x-\path_i(t;\param_\sigma))$ and $\varphi_j(x-\path_j(t;\param_\sigma))$ for all $i\neq j$, $t\in [0,\finalTime]$, and $\sigma=1,\ldots,\ell$, which is in general not a reasonable assumption, cf.~\cite[Ex.~4.4]{BlaSU20} for an illustrative example.

In the past years, there have been some attempts of solving discretized versions of \eqref{eqn:sPODminimization} or related minimization problems.
In \cite{ReiSSM18}, the authors propose a heuristic iterative method for computing a decomposition of a given snapshot matrix by an approximation ansatz as in \eqref{eqn:sPODminimization}.
The numerical experiments indicate promising results, but it is not clear in which situations the proposed method actually determines an optimal solution.
Another heuristic has been recently proposed in \cite[sec.~5.2.1]{BlaSU21b} and it is applied to snapshot data of a wildland fire simulation.
This method is based on a decomposition of the snapshot matrix and involves a small number of singular value decompositions without requiring an iterative procedure.
The numerical results presented in \cite{BlaSU21b} demonstrate the effectiveness of this approach, but it is in general not optimal in the sense of the minimization problem \eqref{eqn:sPODminimization}.
In contrast, the method introduced in \cite{SchRM19} directly solves a fully discretized version of \eqref{eqn:sPODminimization} by determining optimal modes $\varphi$ and coefficients $\ROMstate$, but assumes the paths $\path$ to be given or determined in a pre-processing step.
A similar optimization problem has been proposed in \cite{Rei21} and aims at approximating the snapshot matrix by a sum of matrices representing different reference frames while achieving a fast singular value decay in each of the reference frames.
The corresponding cost function is shown to be an upper bound for a fully discretized version of \eqref{eqn:sPODminimization} and the cost functions coincide for the special case that only one reference frame is considered, i.e., if all ansatz functions in \eqref{eqn:sPODminimization} are shifted by the same amount.
Again, the paths are not considered as part of the optimization problem, but instead determined in a pre-processing step via peak or front tracking.
On the contrary, the authors in \cite{MenBALK20} focus on determining optimal paths, whereas the determination of optimal ansatz functions and coefficients is not addressed.
Moreover, to simplify the optimization problem, the paths are sought within a low-dimensional subspace consisting of predefined time-dependent library functions.
As in the case of the other mentioned works, also the authors in \cite{MenBALK20} consider a fully discrete optimization problem. 

We conclude that a gradient-based algorithm for the full optimization problem~\eqref{eqn:sPODminimization} is currently not available. Besides, a rigorous proof showing that~\eqref{eqn:sPODminimization} has a solution is missing in the literature. A notable exception is provided in \cite[Thm.~4.6]{BlaSU20}, albeit under the assumption that the path variables $\path_i(t)$ are known a-priori. In this paper we aim to close this gap. Our main contributions are the following:
\begin{enumerate}
	\item We show in Theorem~\ref{thm:existenceMinimizingSolution} the existence of a minimizing solution for the optimization problem~\eqref{eqn:minimizationProblem}, which generalizes the minimization problem~\eqref{eqn:sPODminimization}.
	Afterward, we reformulate the constrained minimization problem~\eqref{eqn:minimizationProblem} as unconstrained problem~\eqref{eqn:minimizationProblem:unconstrained} by adding appropriate penalty terms and conclude from Theorem~\ref{thm:existenceMinimizingSolution} that also the unconstrained problem has a solution, cf.~Corollary~\ref{cor:existenceRegularizedMinimization}. In addition, Theorem~\ref{thm:convergencePenaltyMethod} details that the solution of the unconstrained problem converges to the solution of the constraint problem. 
	\item We compute the gradient of the unconstrained problem in Theorem~\ref{thm:gradient}, which enables the use of gradient-based methods to solve~\eqref{eqn:sPODminimization}.
		In this context, a remarkable finding is that the paths have to be sufficiently smooth (e.g. in $H^1(0,\finalTime)$), since otherwise some directional derivatives of the cost functional with respect to the paths may not exist, cf.~ Example~\ref{ex:counterExampleDirectionalDerivative}.
	\item We discuss the discretization of the gradient in space and time in section~\ref{sec:discretization} and explicitly compute the path-dependent inner products for the shift operator with periodic boundary conditions in Example~\ref{ex:innerProducts}.  
		Finally, the effectiveness of gradient-based optimization is demonstrated for several examples in section~\ref{sec:examples}.
\end{enumerate}

\paragraph*{Notation} 

We denote the space of real $m\times n$ matrices by $\R^{m\times n}$ and the transpose of a matrix $A$ is written as $A^\top$.
Furthermore, for a vector with $n$ entries all equal to one we use the symbol $\mathbf{1}_n$.
Besides, for abbreviating diagonal and blockdiagonal matrices we use 
\begin{equation*}
	\diag(a_1,\ldots,a_n) \vcentcolon=
	\begin{bmatrix}
		a_1 	& 				& \\
				& \ddots 	& \\
				&				& a_n
	\end{bmatrix}
	,\quad \mathrm{blkdiag}(A_1,\ldots, A_n) \vcentcolon=
	\begin{bmatrix}
		A_1 	& 				& \\
				& \ddots 	& \\
				&				& A_n
	\end{bmatrix}
	,
\end{equation*}
respectively, where $a_1,\ldots,a_n$ are scalars and $A_1,\ldots, A_n$ matrices of arbitrary size.
For the Kronecker product of two matrices $A$ and $B$ we write $A\otimes B$.
The space of square-integrable functions mapping from an interval $(a,b)$ to a Banach space $\BS$ is denoted by $L^2(a,b;\BS)$ and, similarly, the space of essentially bounded measurable functions by $L^\infty(a,b;\BS)$.
Furthermore, we use $H^1(a,b;\BS)$ for the Sobolev subspace of functions in $L^2(a,b;\BS)$ possessing also a weak derivative in $L^2(a,b;\BS)$.
The corresponding subspace consisting of $H^1(a,b;\BS)$ functions whose values at the boundaries $a$ and $b$ coincide is denoted by $H^1_{\mathrm{per}}(a,b;\BS)$.
Besides, for the space of continuous functions from $[a,b]$ to $\BS$ we use the symbol $C([a,b];\BS)$. 
For the special case $\BS=\R$, we omit the last argument, i.e., we write, for instance, $L^2(a,b)$ instead of $L^2(a,b;\R)$.

\section{Preliminaries and problem formulation}
\label{sec:prelim}

To formalize the optimization problem~\eqref{eqn:sPODminimization}, we introduce the following spaces and notation.
Consider a real Hilbert space $\left( \BS, \left< \cdot, \cdot \right>_{\BS} \right)$ with induced norm
$\|\cdot\|_{\BS}$, and let $\BSS$ denote a dense subspace of $\BS$ that itself is a reflexive Banach space with norm
$\|\cdot\|_{\BSS}$. Our standing assumption is that we are minimizing the mean-squared distance to the data $\state\in
L^2(0,\finalTime;\BSS)$ in the Bochner space $L^2(0,\finalTime;\BS)$ with the additional requirement that the modes are elements of $\BSS$.  

To formalize the meaning of $\varphi_i(x-\path_i(t))$ in~\eqref{eqn:sPODminimization}, we follow the notation in \cite{BlaSU20} and introduce a family of linear and bounded operators $\transform_i\colon \PS_i\to\mathscr{B} (\BS)$ with real, finite-dimensional vector space $\PS_i$, for which we postulate the following properties, taken from \cite[Ass.~4.1]{BlaSU20}.

\begin{assumption}
	\label{ass:Transformation}
	For every $i=1,\ldots,\ROMdim$, every $\mode_i \in \BSS$,  and every $\path_i\in\PS_i$,  the operator $\transform_i(\path_i)$ is $\BSS$-invariant, i.e., $\transform_i(\path_i)\BSS\subseteq \BSS$, and the mapping
	\begin{equation*}
		\PS_i\to\BS,\qquad \path_i\mapsto \transform_i(\path_i)\mode_i
	\end{equation*}
	is continuous.
\end{assumption}

A particular example for such a family of operators is given by the shift operator with periodic boundary conditions, see for instance \cite[Ex.~5.2]{BlaSU20}. For further examples we refer to \cite{BeyT04,KraSR21}.

For the ease of presentation, we restrict ourselves to the case $\PS_i=\R$, and emphasize that all results can be generalized to $\PS_i=\R^{m_i}$ for some $m_i\in\N$.  For 
\begin{equation}
	\label{eqn:combinedSpace}
	\mathscr{Z} \vcentcolon= L^{2}(0,\finalTime; \R^{\ROMdim}) \times H^{1}(0,\finalTime; \R^{\ROMdim}) \times \BSS^{\ROMdim}
\end{equation}
let us define the cost functional
\begin{equation}
	\label{eqn:costFunctional}
		J \colon \mathscr{Z} \to \R, \qquad
		 \left( \coeffVector, \pathVector, \modeVector \right) \mapsto \frac{1}{2} \left\| z - \sum_{i = 1}^{r} \coeff_{i}\transform_{i} \left( \path_{i} \right) \mode_{i} \right\|_{L^2(0,\finalTime;\BS)}^{2}
\end{equation}
and for $C>0$ the space
\begin{equation}
	\label{eqn:admissibleSet}
	\Admissible_C \vcentcolon= \left\{ (\coeffVector,\pathVector,\modeVector)\in\mathscr{Z} \,\left|\,
		\max \left\{ \| \mode_{i} \|_{\BSS}, \| \coeff_{i} \|_{L^{2} \left( 0,\finalTime \right)}, \| \path_{i} \|_{H^{1} \left( 0,\finalTime \right)} \right\} \leq C
	\right.\right\},
\end{equation}
where we use the notation $\coeffVector = (\coeff_1,\ldots,\coeff_\ROMdim)$ to denote the coefficients of $\coeffVector$ and analogously for $\pathVector$ and $\modeVector$. 
To ensure that the norm in~\eqref{eqn:costFunctional} is defined, we invoke the following assumption, which is for instance satisfied if the family of operators~$\transform_i(\cdot)$ is uniformly bounded, cf.~\cite[Lem~4.2]{BlaSU20}.

\begin{assumption}
	\label{ass:transformedModesInL2}
	For every $i=1,\ldots,\ROMdim$,  $\coeff_i\in L^2(0,\finalTime)$,  $\path_i\in H^1(0,\finalTime)$, and every $\mode_i\in\BSS$, we assume 
	\begin{displaymath}
		\coeff_i\transform_i(\path_i)\mode_i\in L^2(0,\finalTime;\BS).
	\end{displaymath}
\end{assumption}

With these preparations, the constrained minimization problem that we are interested in takes the form 
\begin{align}
	\label{eqn:minimizationProblem}
	\min_{\left( \coeffVector, \pathVector, \modeVector \right)} J \left( \coeffVector, \pathVector, \modeVector \right), \quad \text{s.\,t. } \left( \coeffVector, \pathVector, \modeVector \right) \in \Admissible_C.
\end{align}
Before we proceed with our main results, let us make the following remarks:
\begin{itemize}
	\item To simplify the notation, we have implicitly set $\ell=1$ in~\eqref{eqn:sPODminimization}, thus assuming a single data sample. We emphasize that it is straightforward to generalize all results to $\ell>1$.
	\item The restriction of the optimization parameters to the admissible set stated in \eqref{eqn:admissibleSet} is used for proving the existence of a minimizing solution, cf.~Theorem~\ref{thm:existenceMinimizingSolution}, and helps to circumvent the problem that the cost functional in \eqref{eqn:costFunctional} is not coercive.
		For instance, since the transformed modes $\transform_{i} \left( \path_{i} \right) \mode_{i}$ may be
		linearly dependent, the corresponding coefficients may become unbounded, even though the value of the cost functional remains constant.
		The restriction to an admissible set of bounded functions as in \eqref{eqn:admissibleSet} allows to avoid such difficulties.
		Moreover, also from an application point of view, we note that it is usually reasonable to restrict to
		bounded optimization parameters, since when decomposing flow data, for example, we are usually not interested in unbounded coefficients, discontinuous paths, or modes which are less regular than the given flow data.
\end{itemize}

\section{Main results}
\label{sec:mainResults}

As first main result, we discuss the existence of a solution for the optimization problem~\eqref{eqn:minimizationProblem}, thus generalizing \cite[Thm.~4.6]{BlaSU20} to include the path variables.

\begin{theorem}
	\label{thm:existenceMinimizingSolution}
	Assume that the reflexive Banach space $\BSS$ is compactly embedded into $\BS$, and let $\state\in
	L^2(0,\finalTime;\BSS)$.  Furthermore, let the family of transformation operators satisfiy Assumptions~\ref{ass:Transformation} and~\ref{ass:transformedModesInL2}.  Then the constraint minimization problem~\eqref{eqn:minimizationProblem} has a solution for every $C>0$.
\end{theorem}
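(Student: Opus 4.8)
The plan is to use the direct method of the calculus of variations. First I would take a minimizing sequence $(\coeffVector^{(k)}, \pathVector^{(k)}, \modeVector^{(k)})_{k\in\N}$ in $\Admissible_C$ such that $J(\coeffVector^{(k)}, \pathVector^{(k)}, \modeVector^{(k)})$ converges to the infimum $J^* \vcentcolon= \inf_{\Admissible_C} J$, which is finite and nonnegative by Assumption~\ref{ass:transformedModesInL2} and the definition of $J$. Since every component of the sequence is bounded in the respective norm by the constant $C$, I can extract weakly (or weakly-$*$) convergent subsequences: $\coeff_i^{(k)} \rightharpoonup \coeff_i^*$ in $L^2(0,\finalTime)$, $\path_i^{(k)} \rightharpoonup \path_i^*$ in $H^1(0,\finalTime)$, and $\mode_i^{(k)} \rightharpoonup \mode_i^*$ in $\BSS$ (using reflexivity of $\BSS$ and of the Bochner/Sobolev spaces). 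The limit $(\coeffVector^*, \pathVector^*, \modeVector^*)$ again lies in $\Admissible_C$ because the norm balls are weakly closed (convexity plus closedness, via Mazur's lemma).

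Next I would upgrade some of these weak convergences to strong ones using the compactness hypotheses. The compact embedding $\BSS \hookrightarrow\hookrightarrow \BS$ gives $\mode_i^{(k)} \to \mode_i^*$ strongly in $\BS$. For the paths, the compact embedding $H^1(0,\finalTime) \hookrightarrow\hookrightarrow C([0,\finalTime])$ (Rellich--Kondrachov / Morrey in one dimension) yields $\path_i^{(k)} \to \path_i^*$ strongly in $C([0,\finalTime])$, hence uniformly and pointwise in $t$. Combining these with the continuity statement in Assumption~\ref{ass:Transformation}, I expect to deduce that $\transform_i(\path_i^{(k)}(t))\mode_i^{(k)} \to \transform_i(\path_i^*(t))\mode_i^*$ in $\BS$ for (almost) every $t$; the uniform boundedness underlying Assumption~\ref{ass:transformedModesInL2} should then let me pass to the limit in the Bochner norm via dominated convergence, giving $\transform_i(\path_i^{(k)})\mode_i^{(k)} \to \transform_i(\path_i^*)\mode_i^*$ strongly in $L^2(0,\finalTime;\BS)$. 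Finally, the products $\coeff_i^{(k)}\transform_i(\path_i^{(k)})\mode_i^{(k)}$ converge weakly in $L^2(0,\finalTime;\BS)$ to $\coeff_i^*\transform_i(\path_i^*)\mode_i^*$ — this is a standard weak-times-strong argument — and therefore the whole residual $\state - \sum_i \coeff_i^{(k)}\transform_i(\path_i^{(k)})\mode_i^{(k)}$ converges weakly in $L^2(0,\finalTime;\BS)$ to the residual at the limit point. Weak lower semicontinuity of the norm then gives $J(\coeffVector^*, \pathVector^*, \modeVector^*) \le \liminf_k J(\coeffVector^{(k)}, \pathVector^{(k)}, \modeVector^{(k)}) = J^*$, so the limit point is a minimizer.

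The main obstacle I anticipate is the passage to the limit inside the transformation operator, i.e., establishing $\transform_i(\path_i^{(k)})\mode_i^{(k)} \to \transform_i(\path_i^*)\mode_i^*$ in $L^2(0,\finalTime;\BS)$ with only joint information — weak convergence of the mode and strong uniform convergence of the path. Assumption~\ref{ass:Transformation} gives continuity of $\path_i \mapsto \transform_i(\path_i)\mode_i$ for a \emph{fixed} mode $\mode_i$, so the argument must carefully split the difference $\transform_i(\path_i^{(k)})\mode_i^{(k)} - \transform_i(\path_i^*)\mode_i^*$ into $\transform_i(\path_i^{(k)})(\mode_i^{(k)} - \mode_i^*)$ plus $(\transform_i(\path_i^{(k)}) - \transform_i(\path_i^*))\mode_i^*$; the first term needs a (locally) uniform operator bound, which is exactly what Assumption~\ref{ass:transformedModesInL2} and the surrounding discussion supply, and the second term needs the pointwise-in-$t$ continuity together with a dominated convergence argument in $t$. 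Making these integrability and uniform-boundedness details rigorous — in particular justifying the dominating function for the Bochner integral — is where the real work lies; the rest is the routine machinery of the direct method.
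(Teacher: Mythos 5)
Your proposal follows essentially the same route as the paper's proof: a minimizing sequence in $\Admissible_C$, weak compactness via reflexivity, the compact embeddings to upgrade the modes (strongly in $\BS$) and the paths (pointwise in $t$) to strong convergence, the triangle-inequality split of $\transform_i(\path_i^{k})\mode_i^{k}-\transform_i(\path_i^\star)\mode_i^\star$, a weak-times-strong pairing to pass to the limit in the residual, and weak sequential lower semicontinuity of the norm. The only cosmetic differences are that the paper establishes weak convergence of the approximants by testing against an arbitrary $f\in L^2(0,\finalTime;\BS)$ rather than asserting strong $L^2(0,\finalTime;\BS)$ convergence of the transformed modes, and that it obtains pointwise a.e.\ convergence of the paths from the compact embedding $H^1\hookrightarrow L^2$ plus a further subsequence instead of Morrey's embedding into $C([0,\finalTime])$; the dominated-convergence detail you flag as the remaining work is glossed over in the paper's own argument as well.
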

\begin{proof}
	The proof follows along the lines of the proof of \cite[Thm.~4.6]{BlaSU20}, with slight modifications to account for the optimization with respect to the path variables.  Let $C>0$. We first observe that the optimization problem possesses a finite infimum $J^\star\geq 0$. This follows directly from $J\geq 0$ and $(0,0,0)\in \Admissible_C$. We may thus choose a sequence $(\coeffVector^k,\pathVector^k,\modeVector^k)_{k\in\N}\in\Admissible_C$ satisfying
	\begin{displaymath}
		\lim_{k\to\infty} J(\coeffVector^k,\pathVector^k,\modeVector^k) = J^\star.
	\end{displaymath}
	Additionally, we have
	\begin{multline*}
		\left\| (\coeffVector^k, \pathVector^k, \modeVector^k) \right\|_{L^2(0,\finalTime; \R^\ROMdim) \times H^1(0,\finalTime; \R^\ROMdim) \times \BSS^\ROMdim}^2
		\\
		= \| \coeffVector^k \|_{L^2(0,\finalTime; \R^\ROMdim)}^2 + \|\pathVector^k \|_{H^1(0,\finalTime; \R^\ROMdim)}^2 + \| \modeVector^k \|_{\BSS^\ROMdim}^2
		\leq 3 r C^2
	\end{multline*}
	for all $k\in\N$, such that the Eberlein-\u{S}muljan theorem \cite[Thm.~21.D]{Zei90a} ensures the existence of a weakly convergent subsequence
	$( \coeffVector^{k_n}, \pathVector^{k_n}, \modeVector^{k_n})_{n \in \N}\subseteq \Admissible_C$ with weak limit $( \coeffVector^{\star}, \pathVector^{\star}, \modeVector^{\star})\in\Admissible_C$, cf.~\cite[Prop.~21.23\,(c)]{Zei90a}.
	Due to the compact embeddings $\BSS\hookrightarrow\BS$ and $H^1(0,\finalTime; \R^{\ROMdim}) \hookrightarrow L^2(0,\finalTime; \R^{\ROMdim})$, we conclude that $(\modeVector^{k_n})_{n\in\N}$ and $(\pathVector^{k_n})_{n\in\N}$ converge strongly in $\BS$ and $L^2(0,\finalTime; \R^{\ROMdim})$ to $\modeVector^\star$ and $\pathVector^\star$, respectively, cf.~\cite[Prop.~21.35]{Zei90a}. Using \cite[Thm.~3.12]{Rud86}, we conclude the existence of a subsequence, for which we use the same indexing, such that $(\pathVector^{k_n})_{n\in\N}$ converges pointwise to $\pathVector^\star$ for almost all $t\in(0,\finalTime)$.
	
	For the next part of the proof, we introduce the mapping
	\begin{align}
		\label{eqn:betaMapping}
		\beta \colon \mathscr{Z}\to L^2(0,\finalTime;\BS),\qquad
		(\coeffVector,\pathVector,\modeVector) \mapsto \sum_{i=1}^\ROMdim \coeff_i\transform_i(\path_i)\mode_i
	\end{align}
	with $\mathscr{Z}$ as defined in~\eqref{eqn:combinedSpace} and notice
	\begin{displaymath}
		J(\coeffVector,\pathVector,\modeVector) = \frac{1}{2} \|z - \beta(\coeffVector,\pathVector,\modeVector)\|_{L^2(0,\finalTime;\BS)}^2.
	\end{displaymath}
	If $\beta(\coeffVector^{k_n},\pathVector^{k_n},\modeVector^{k_n})$ converges weakly to $\beta(\coeffVector^\star,\pathVector^\star,\modeVector^\star)$, then the weak sequential lower semicontinuity of the norm, see for instance \cite[Prop.~21.23\,(c)]{Zei90a}, implies that $(\coeffVector^\star,\pathVector^\star,\modeVector^\star)$ is a minimizer of $J$. 
	It thus remains to show that $\beta(\coeffVector^{k_n},\pathVector^{k_n},\modeVector^{k_n})$ converges weakly to $\beta(\coeffVector^\star,\pathVector^\star,\modeVector^\star)$. 
	
	To this end,  we observe that
	\begin{align*}
		&\| \transform_{i} (\path_i^{k_n}(t)) \mode_i^{k_n} - \transform_{i} (\path_{i}^{\star}(t))\mode^{\star} \|_{\BS}\\
		&\leq \| \transform_{i} ( \path_{i}^{k_n}(t)) \mode_{i}^{k_n} - \transform_{i}(\path_{i}^{\star}(t)) \mode_{i}^{k_n} \|_{\BS}
		+ \| \transform_{i} (\path_{i}^{\star}(t)) \mode_{i}^{k_n} - \transform_{i} (\path_{i}^{\star}(t)) \mode_{i}^{\star} \|_{\BS}
	\end{align*}
	together with Assumption~\ref{ass:Transformation} and the strong convergence of $(\mode_i^{k_n})_{n\in\N}$ in $\BS$ implies
	\begin{align*}
		 \| \transform_{i} (\path_i^{k_n}(t)) \mode_i^{k_n} - \transform_{i} (\path_{i}^{\star}(t))\mode^{\star} \|_{\BS} \to 0\qquad \text{for } n\to\infty
	\end{align*}
	for $i=1,\ldots,\ROMdim$ and almost all $t\in(0,T)$.  Let $f\in L^2(0,\finalTime;\BS)$. Then clearly
	\begin{displaymath}
		\langle f(t), \transform_{i} (\path_i^{k_n}(t)) \mode_i^{k_n}\rangle_{\BS} \to \langle f(t), \transform_{i} (\path_i^\star(t)) \mode_i^\star\rangle_{\BS}\qquad\text{for } n\to\infty
	\end{displaymath}
	for $i=1,\ldots,\ROMdim$ and almost all $t\in(0,T)$ such that \cite[Prop~21.23\,(j)]{Zei90a} implies
	\begin{align*}
		\sum_{i=1}^\ROMdim \left\langle \coeff_i^{k_n}, \langle f, \transform_i(\path_i^{k_n})\mode_i^{k_n}\rangle_{\BS}\right\rangle_{L^2(0,\finalTime)} \to \sum_{i=1}^\ROMdim \left\langle \coeff_i^\star, \langle f, \transform_i(\path_i^\star)\mode_i^\star\rangle_{\BS}\right\rangle_{L^2(0,\finalTime)}
 	\end{align*}
 	for $n\to\infty$ 	and thus
 	\begin{align*}
 		\beta(\coeffVector^{k_n},\pathVector^{k_n},\modeVector^{k_n}) \rightharpoonup \beta(\coeffVector^\star,\pathVector^\star,\modeVector^\star)\qquad\text{for } n\to\infty,
 	\end{align*}
 	which completes the proof.
	\qed
\end{proof}

For numerical methods, it may be easier to work with unconstrained optimization problems. To this end, we use a penalty method, see for instance \cite[Cha.~13.1]{LueY16}, i.e., we add the constraint equation with a penalty parameter to the cost functional. In more detail, we assume for $C>0$ a penalty functional
\begin{equation}
	\label{eqn:abstractPenaltyFunction}
	\penaltyFunctional_C\colon \mathscr{Z}\to\R
\end{equation}
with the following properties available.

\begin{assumption}
	\label{ass:penaltyFunctional}
	The penalty functional~\eqref{eqn:abstractPenaltyFunction} is continuous, weakly sequentially lower semicontinuous, non-negative and has the following properties:
\begin{itemize}
	\item We have $\penaltyFunctional_C(\coeffVector,\pathVector,\modeVector) = 0$ if, and only if, $(\coeffVector,\pathVector,\modeVector)\in \Admissible_C$.
	\item For any sequence $(\coeffVector^k,\pathVector^k,\modeVector^k)$ with 
		\begin{displaymath}
			\max\{\|\coeffVector^k\|_{L^2(0,\finalTime;\R^{\ROMdim})}, \|\pathVector^k\|_{H^1(0,\finalTime;\R^{\ROMdim})}, \|\modeVector^k\|_{\BSS^\ROMdim}\} \to \infty\qquad\text{for } k\to\infty,
		\end{displaymath}
		we have $\penaltyFunctional_C(\coeffVector^k,\pathVector^k,\modeVector^k) \to \infty$ for $k\to\infty$.
\end{itemize}
\end{assumption}

\begin{example}
	The penalty functional
	\begin{align*}
		\penaltyFunctional_C(\coeffVector,\pathVector,\modeVector) \vcentcolon= \sum_{i=1}^\ROMdim \max\{0,\max\{\| \coeff_{i} \|_{L^{2}(0,\finalTime)},\left\Vert \path_{i} \right\Vert_{H^{1}(0,T)},\left\Vert \mode_{i} \right\Vert_{\BSS}\}-C\}
	\end{align*}
	satisfies Assumption~\ref{ass:penaltyFunctional}.
\end{example}

The penalized cost functional is then given as
\begin{displaymath}
	\widetilde{J}_C(\coeffVector,\pathVector,\modeVector,\penaltyTerm) \vcentcolon= J(\coeffVector,\pathVector,\modeVector) + \penaltyTerm \penaltyFunctional_C(\coeffVector,\pathVector,\modeVector)
\end{displaymath}
with penalty coefficient $\penaltyTerm>0$. The associated (unconstrained) minimization problem is thus given by
\begin{align}
	\label{eqn:minimizationProblem:unconstrained}
	\min_{(\coeffVector, \pathVector, \modeVector)\in\mathscr{Z}} \widetilde{J}_C( \coeffVector, \pathVector, \modeVector,\penaltyTerm)
\end{align}
with $\mathscr{Z}$ as defined in~\eqref{eqn:combinedSpace} and given $\penaltyTerm>0$.

\begin{corollary}
	\label{cor:existenceRegularizedMinimization}
	Let the assumptions of Theorem~\ref{thm:existenceMinimizingSolution} and Assumption~\ref{ass:penaltyFunctional} be satisfied. Then for any $C>0$ and any $\penaltyTerm>0$ the optimization problem~\eqref{eqn:minimizationProblem:unconstrained} has a solution. 
\end{corollary}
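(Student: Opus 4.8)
The plan is to run the direct method in the calculus of variations exactly as in the proof of Theorem~\ref{thm:existenceMinimizingSolution}, replacing the single step where boundedness of a minimizing sequence was inherited from membership in $\Admissible_C$ by an argument that extracts this boundedness from the growth (``coercivity-type'') property in Assumption~\ref{ass:penaltyFunctional}.

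Concretely, I would first fix $C>0$ and $\penaltyTerm>0$ and note that $\widetilde{J}_C(\cdot,\cdot,\cdot,\penaltyTerm)$ is bounded below by $0$, since $J\geq 0$, $\penaltyFunctional_C\geq 0$, and $\penaltyTerm>0$, and that its infimum $\widetilde{J}_C^\star$ is finite because $(0,0,0)\in\Admissible_C$ yields $\penaltyFunctional_C(0,0,0)=0$ and hence $\widetilde{J}_C(0,0,0,\penaltyTerm)=J(0,0,0)=\tfrac12\|z\|_{L^2(0,\finalTime;\BS)}^2<\infty$. Pick a minimizing sequence $(\coeffVector^k,\pathVector^k,\modeVector^k)_{k\in\N}\subseteq\mathscr{Z}$ with $\widetilde{J}_C(\coeffVector^k,\pathVector^k,\modeVector^k,\penaltyTerm)\to\widetilde{J}_C^\star$.

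Next I would show this sequence is bounded in $\mathscr{Z}$: if it were not, then along a subsequence $\max\{\|\coeffVector^k\|_{L^2(0,\finalTime;\R^\ROMdim)},\|\pathVector^k\|_{H^1(0,\finalTime;\R^\ROMdim)},\|\modeVector^k\|_{\BSS^\ROMdim}\}\to\infty$, so by the second item of Assumption~\ref{ass:penaltyFunctional} we would have $\penaltyFunctional_C(\coeffVector^k,\pathVector^k,\modeVector^k)\to\infty$ along that subsequence, and since $J\geq 0$ and $\penaltyTerm>0$ this forces $\widetilde{J}_C(\coeffVector^k,\pathVector^k,\modeVector^k,\penaltyTerm)\to\infty$, contradicting convergence to the finite value $\widetilde{J}_C^\star$. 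Hence the minimizing sequence is bounded, and the Eberlein--\v{S}muljan theorem \cite[Thm.~21.D]{Zei90a} provides a subsequence converging weakly in $\mathscr{Z}$ to some $(\coeffVector^\star,\pathVector^\star,\modeVector^\star)\in\mathscr{Z}$.

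From this point the argument is verbatim that of Theorem~\ref{thm:existenceMinimizingSolution}: the compact embeddings $\BSS\hookrightarrow\BS$ and $H^1(0,\finalTime;\R^\ROMdim)\hookrightarrow L^2(0,\finalTime;\R^\ROMdim)$ upgrade the weak convergence of $(\modeVector^{k_n})$ and $(\pathVector^{k_n})$ to strong convergence in $\BS$ and $L^2$ and, after a further subsequence, to pointwise a.e.\ convergence of the paths; invoking Assumption~\ref{ass:Transformation} as before gives $\beta(\coeffVector^{k_n},\pathVector^{k_n},\modeVector^{k_n})\rightharpoonup\beta(\coeffVector^\star,\pathVector^\star,\modeVector^\star)$ in $L^2(0,\finalTime;\BS)$, and weak sequential lower semicontinuity of the norm yields $J(\coeffVector^\star,\pathVector^\star,\modeVector^\star)\leq\liminf_{n\to\infty}J(\coeffVector^{k_n},\pathVector^{k_n},\modeVector^{k_n})$. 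Finally, the weak sequential lower semicontinuity of $\penaltyFunctional_C$ assumed in Assumption~\ref{ass:penaltyFunctional} gives $\penaltyFunctional_C(\coeffVector^\star,\pathVector^\star,\modeVector^\star)\leq\liminf_{n\to\infty}\penaltyFunctional_C(\coeffVector^{k_n},\pathVector^{k_n},\modeVector^{k_n})$, and adding the two inequalities (using $\liminf a_n+\liminf b_n\le\liminf(a_n+b_n)$) shows $\widetilde{J}_C(\coeffVector^\star,\pathVector^\star,\modeVector^\star,\penaltyTerm)\le\widetilde{J}_C^\star$, so $(\coeffVector^\star,\pathVector^\star,\modeVector^\star)$ is a minimizer of~\eqref{eqn:minimizationProblem:unconstrained}. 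The only genuinely new point compared with Theorem~\ref{thm:existenceMinimizingSolution} is the boundedness step, which is exactly why the growth property in Assumption~\ref{ass:penaltyFunctional} was imposed; I do not expect any further obstacle.
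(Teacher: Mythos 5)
Your proposal is correct and follows essentially the same route as the paper: extract boundedness of a minimizing sequence from the growth property in Assumption~\ref{ass:penaltyFunctional} (the paper phrases this as membership in $\Admissible_{\widetilde{C}}$ for some $\widetilde{C}>0$) and then rerun the direct-method argument of Theorem~\ref{thm:existenceMinimizingSolution}. You additionally spell out the final lower-semicontinuity step for the penalized functional, combining the weak sequential lower semicontinuity of the norm with that of $\penaltyFunctional_C$, which the paper leaves implicit in ``follows along the lines of''; this is exactly the right way to fill that gap.
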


\begin{proof}
	Similarly as in the proof of Theorem~\ref{thm:existenceMinimizingSolution}, we conclude the existence of a finite infimum, such that we can choose a minimizing sequence $(\coeffVector^k,\pathVector^k,\modeVector^k)\in \mathscr{Z}$. Due to Assumption~\ref{ass:penaltyFunctional}, we deduce that $(\coeffVector^k,\pathVector^k,\modeVector^k)_{k\in\N}$ is bounded in $\mathscr{Z}$, i.e., there exists some $\widetilde{C}>0$ such that $(\coeffVector^k,\pathVector^k,\modeVector^k)\in \Admissible_{\widetilde{C}}$ for all $k\in\N$. The remaining proof thus follows along the lines of the proof of Theorem~\ref{thm:existenceMinimizingSolution}. \qed
\end{proof}

\begin{theorem}
	\label{thm:convergencePenaltyMethod}
	Let $(\lambda^k)_{k\in\N}\subseteq\R$ denote a non-decreasing sequence of positive numbers with
	$\lim_{k\to\infty} \lambda^k = \infty$, and let $C>0$.  For $k\in\N$, let
	$(\coeffVector^k,\pathVector^k,\modeVector^k)\in\mathscr{Z}$ denote a solution
	of~\eqref{eqn:minimizationProblem:unconstrained} with penalty parameter $\lambda^k$. If the assumptions of Corollary~\ref{cor:existenceRegularizedMinimization} are satisfied, then any limit point of $(\coeffVector^k,\pathVector^k,\modeVector^k)_{k\in\N}$ is a solution of~\eqref{eqn:minimizationProblem}.
\end{theorem}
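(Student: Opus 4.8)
The plan is to run the standard penalty-method convergence argument, adapted to this infinite-dimensional setting. Fix a limit point $(\coeffVector^\star,\pathVector^\star,\modeVector^\star)$ of the sequence of solutions $(\coeffVector^k,\pathVector^k,\modeVector^k)$ to the penalized problems, and let $(\coeffVector^{k_n},\pathVector^{k_n},\modeVector^{k_n})_{n\in\N}$ denote a subsequence converging to it (in whatever sense the term ``limit point'' is taken to mean here — presumably weak convergence in $\mathscr{Z}$, matching the topology used in Theorem~\ref{thm:existenceMinimizingSolution}; I would state this explicitly). Let $(\coeffVector^{\mathrm{c}},\pathVector^{\mathrm{c}},\modeVector^{\mathrm{c}})\in\Admissible_C$ be a solution of the constrained problem~\eqref{eqn:minimizationProblem}, which exists by Theorem~\ref{thm:existenceMinimizingSolution}, with optimal value $J^\star$.

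First I would establish the key monotonicity/boundedness chain. Since $(\coeffVector^{\mathrm{c}},\pathVector^{\mathrm{c}},\modeVector^{\mathrm{c}})$ is feasible for the unconstrained problem with any penalty parameter and $\penaltyFunctional_C$ vanishes on $\Admissible_C$, optimality of $(\coeffVector^k,\pathVector^k,\modeVector^k)$ gives
\begin{displaymath}
	J(\coeffVector^k,\pathVector^k,\modeVector^k) + \lambda^k \penaltyFunctional_C(\coeffVector^k,\pathVector^k,\modeVector^k) \leq J(\coeffVector^{\mathrm{c}},\pathVector^{\mathrm{c}},\modeVector^{\mathrm{c}}) + \lambda^k \penaltyFunctional_C(\coeffVector^{\mathrm{c}},\pathVector^{\mathrm{c}},\modeVector^{\mathrm{c}}) = J^\star.
\end{displaymath}
Since $J\geq 0$ and $\penaltyFunctional_C\geq 0$, this yields both $J(\coeffVector^k,\pathVector^k,\modeVector^k)\leq J^\star$ for all $k$ and $\penaltyFunctional_C(\coeffVector^k,\pathVector^k,\modeVector^k)\leq J^\star/\lambda^k \to 0$. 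The second bound is where $\lambda^k\to\infty$ is used. From here I would argue feasibility of the limit point: by weak sequential lower semicontinuity of $\penaltyFunctional_C$ (Assumption~\ref{ass:penaltyFunctional}) we get $\penaltyFunctional_C(\coeffVector^\star,\pathVector^\star,\modeVector^\star)\leq \liminf_n \penaltyFunctional_C(\coeffVector^{k_n},\pathVector^{k_n},\modeVector^{k_n}) = 0$, hence $\penaltyFunctional_C(\coeffVector^\star,\pathVector^\star,\modeVector^\star)=0$, which by the iff-property in Assumption~\ref{ass:penaltyFunctional} means $(\coeffVector^\star,\pathVector^\star,\modeVector^\star)\in\Admissible_C$, i.e. the limit point is feasible for~\eqref{eqn:minimizationProblem}.

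Next I would show optimality. One needs the lower-semicontinuity statement $J(\coeffVector^\star,\pathVector^\star,\modeVector^\star)\leq \liminf_n J(\coeffVector^{k_n},\pathVector^{k_n},\modeVector^{k_n})$. This is exactly the content established inside the proof of Theorem~\ref{thm:existenceMinimizingSolution}: the weakly convergent sequence $(\coeffVector^{k_n},\pathVector^{k_n},\modeVector^{k_n})$, being bounded in $\mathscr{Z}$, has (after passing to a further subsequence) strongly convergent mode- and path-components via the compact embeddings, $\beta$ is weakly sequentially continuous along such sequences, and $\|z-\cdot\|_{L^2(0,\finalTime;\BS)}^2$ is weakly sequentially lower semicontinuous. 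I would invoke that argument verbatim rather than reproducing it. Combining with the bound $J(\coeffVector^{k_n},\pathVector^{k_n},\modeVector^{k_n})\leq J^\star$ gives $J(\coeffVector^\star,\pathVector^\star,\modeVector^\star)\leq J^\star$. Since the limit point is feasible and $J^\star$ is the infimum over $\Admissible_C$, equality holds and $(\coeffVector^\star,\pathVector^\star,\modeVector^\star)$ solves~\eqref{eqn:minimizationProblem}.

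The main obstacle — or rather, the point requiring care rather than real difficulty — is the interplay of subsequences and the precise meaning of ``limit point.'' The boundedness of $(\coeffVector^k,\pathVector^k,\modeVector^k)$ in $\mathscr{Z}$ is \emph{not} immediate from the problem data; it follows from the displayed inequality together with the coercivity-type second bullet of Assumption~\ref{ass:penaltyFunctional}: if the norms were unbounded along a subsequence then $\penaltyFunctional_C\to\infty$ there, contradicting $\penaltyFunctional_C(\coeffVector^k,\pathVector^k,\modeVector^k)\leq J^\star$. So in fact the whole sequence lies in some $\Admissible_{\widetilde C}$, limit points exist in the weak topology by Eberlein--\v{S}muljan, and every step of the Theorem~\ref{thm:existenceMinimizingSolution} machinery applies on the relevant subsequence. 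I would flag that the lower-semicontinuity of $J$ along the chosen subsequence may itself require extracting a further subsequence (for the pointwise-a.e.\ path convergence), but since we are only proving a property of a \emph{given} limit point this causes no circularity.
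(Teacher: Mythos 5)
Your proof is correct and follows the same classical penalty-method template as the paper (both are adaptations of the argument in \cite[Cha.~13.1]{LueY16}): compare the penalized minimizer against a constrained minimizer to get $\widetilde{J}_C(\coeffVector^k,\pathVector^k,\modeVector^k,\lambda^k)\leq J^\star$, force the penalty term to zero, and conclude feasibility plus optimality of the limit point. The execution differs in two places worth noting. First, you obtain $\penaltyFunctional_C(\coeffVector^k,\pathVector^k,\modeVector^k)\leq J^\star/\lambda^k\to 0$ directly from the single inequality, whereas the paper first establishes monotone convergence of $\widetilde{J}_C(\coeffVector^k,\pathVector^k,\modeVector^k,\lambda^k)$ to some $\widetilde{J}_C^\star\leq J^\star$ and then deduces $\lim_n\lambda^{k_n}\penaltyFunctional_C=\widetilde{J}_C^\star-J^\dagger<\infty$, which combined with $\lambda^{k_n}\to\infty$ forces $\penaltyFunctional_C\to 0$; your route is shorter and dispenses with the monotonicity chain entirely. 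Second, and more substantively, you interpret ``limit point'' in the weak topology of $\mathscr{Z}$ and use only weak sequential lower semicontinuity of $\penaltyFunctional_C$ and of $J$ (the latter via the $\beta$-machinery of Theorem~\ref{thm:existenceMinimizingSolution}), whereas the paper passes to the limit via ``the continuity of $J$'' and of $\penaltyFunctional_C$, which implicitly presupposes strong convergence of the subsequence in $\mathscr{Z}$ — a convergence whose availability is not argued there. Your version also supplies the missing boundedness argument (via the coercivity bullet of Assumption~\ref{ass:penaltyFunctional}) that guarantees weak limit points actually exist, and your remark that the upper bound $J(\coeffVector^{k_n},\pathVector^{k_n},\modeVector^{k_n})\leq J^\star$ holds along every further subsequence correctly defuses the only subtlety in combining the two bounds. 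In short: same strategy, but your write-up is the more careful of the two on the topological points.
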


\begin{proof}
	The proof follows along the lines of the proof of the main theorem in \cite[Cha.~13.1]{LueY16}.
	Let $(\coeffVector^\star,\pathVector^\star,\modeVector^\star)\in\Admissible_C$ denote a minimizer of~\eqref{eqn:minimizationProblem} with minimum $J^\star$. Then for every $k\in\N$ we have
	\begin{align*}
		\widetilde{J}_C(\coeffVector^k,\pathVector^k,\modeVector^k,\lambda^k) &\leq  \widetilde{J}_C(\coeffVector^{k+1},\pathVector^{k+1},\modeVector^{k+1},\lambda^k)\\
		&\leq \widetilde{J}_C(\coeffVector^{k+1},\pathVector^{k+1},\modeVector^{k+1},\lambda^{k+1})
	\end{align*}
	and 
	\begin{align*}
		J(\coeffVector^k,\pathVector^k,\modeVector^k) \leq \widetilde{J}_C(\coeffVector^k,\pathVector^k,\modeVector^k,\lambda^k) \leq \widetilde{J}_C(\coeffVector^\star,\pathVector^\star,\modeVector^\star,\lambda^k) = {J}(\coeffVector^\star,\pathVector^\star,\modeVector^\star) = J^\star.
	\end{align*}
	Thus $(\widetilde{J}_C(\coeffVector^k,\pathVector^k,\modeVector^k,\lambda^k))_{k\in\N}$ is a monotone sequence bounded above by $J^\star$. We thus set
	\begin{equation}
		\label{eqn:helper:a}
		\widetilde{J}_C^\star \vcentcolon= \lim_{k\to\infty} \widetilde{J}_C(\coeffVector^k,\pathVector^k,\modeVector^k,\lambda^k) \leq J^\star.
	\end{equation}
	Let $(\coeffVector^{k_n},\pathVector^{k_n},\modeVector^{k_n})$ denote a convergent subsequence with limit $(\coeffVector^\dagger,\pathVector^\dagger,\modeVector^\dagger)$ and set 
	\begin{equation}
		\label{eqn:helper:b}
		J^\dagger \vcentcolon= \lim_{n\to\infty} J(\coeffVector^{k_n},\pathVector^{k_n},\modeVector^{k_n}) = J(\coeffVector^\dagger,\pathVector^\dagger,\modeVector^\dagger), 
	\end{equation}
	using the continuity of $J$.
	Subtracting~\eqref{eqn:helper:a} from~\eqref{eqn:helper:b} yields
	\begin{displaymath}
		\lim_{n\to\infty} \lambda^{k_n} \penaltyFunctional_C(\coeffVector^{k_n},\pathVector^{k_n},\modeVector^{k_n}) = \widetilde{J}_C^\star - J^\dagger.
	\end{displaymath}
	Assumption~\ref{ass:penaltyFunctional} and $\lambda^{k_n}\to\infty$ for $n\to\infty$ together with the continuity of $\penaltyFunctional_C$ thus implies
	\begin{displaymath}
		\penaltyFunctional_C(\coeffVector^\dagger,\pathVector^\dagger,\modeVector^\dagger) = \lim_{n\to\infty} \penaltyFunctional_C(\coeffVector^{k_n},\pathVector^{k_n},\modeVector^{k_n}) = 0,
	\end{displaymath}
	showing $(\coeffVector^\dagger,\pathVector^\dagger,\modeVector^\dagger)\in\Admissible_C$. We conclude
	\begin{align*}
		J^\dagger = \lim_{n\to\infty} J(\coeffVector^{k_n},\pathVector^{k_n},\modeVector^{k_n}) \leq J^\star,
	\end{align*}
	which completes the proof. \qed
\end{proof}

Although~\eqref{eqn:minimizationProblem:unconstrained} is an unconstrained optimization problem, we still have to choose a suitable constant $C>0$ for the admissible set. 
Let us emphasize that the proofs of Theorem~\ref{thm:existenceMinimizingSolution} and Corollary~\ref{cor:existenceRegularizedMinimization} heavily depend on the fact that we have bounded sequences, which is the main reason for the constant $C>0$ in the admissible set~\eqref{eqn:admissibleSet}. 
However, we observed faster convergence in our numerical experiments when considering the unconstrained minimization problem without penalization.
For this reason and the sake of a concise presentation, we consider in the following only the unconstrained optimization problem~\eqref{eqn:minimizationProblem:unconstrained} with penalty parameter $\lambda=0$.
Nevertheless, we emphasize that adding the derivatives of the penalty terms to the gradient formulas is straightforward as long as the partial Fréchet derivatives of $\penaltyFunctional_C$ are available.

To solve the optimization problem~\eqref{eqn:minimizationProblem:unconstrained} with penalty parameter $\lambda=0$ numerically, we employ a gradient-based algorithm and thus have to compute the gradient of the objective function~\eqref{eqn:costFunctional}. 
It is easy to see that the directional derivatives of $J$ with respect to the coefficient function $\coeffVector\in L^2(0,\finalTime;\R^\ROMdim)$ and the modes $\modeVector\in\BSS^{\ROMdim}$ in directions $\boldsymbol{d} \in L^2( 0,\finalTime; \R^\ROMdim)$ and $\boldsymbol{h}\in \BSS^{\ROMdim}$, respectively, are given by
\begin{subequations}
	\label{eqn:directionalDerivative}
\begin{align}
	\label{eqn:directionalDerivative:coeff}	
	\partial_{\coeffVector,\boldsymbol{d}} J(\coeffVector,\pathVector,\modeVector) &= \sum_{i=1}^\ROMdim \left \langle \sum_{j=1}^\ROMdim \coeff_j\transform_j(\path_j)\mode_j-z,d_i\transform_i(\path_i)\mode_i\right\rangle_{\!\!\!L^2(0,\finalTime;\BS)},\\
	\label{eqn:directionalDerivative:mode}	\partial_{\modeVector,\boldsymbol{h}} J(\coeffVector,\pathVector,\modeVector) &= \sum_{i=1}^\ROMdim \left \langle \sum_{j=1}^\ROMdim \coeff_j\transform_j(\path_j)\mode_j-z,\coeff_i\transform_i(\path_i) h_i\right\rangle_{\!\!\!L^2(0,\finalTime;\BS)}.
\end{align}
The situation is slightly different for the partial derivative with respect to the path variable. First of all, we have to ensure that the transformed modes are differentiable (with respect to the path variable), i.e., we have to evoke the following assumption.
\begin{assumption}
	\label{ass:TransformationDerivative}
	For every $\mode_i \in \BSS$ and every $i=1,\ldots,\ROMdim$, the mapping
	\begin{align*}
		\R\to\BS,\qquad \path_i \mapsto \transform_i(\path_i)\mode_i,
	\end{align*}
	is continuously differentiable with derivatives in $\BS$.  For $\path_i\in\R$ we denote the derivative by $\frac{\partial}{\partial \path_i}\transform_{i}
	\left( \path_i \right) \mode_i\in\BS$ and assume $\coeff_i\frac{\partial}{\partial \path_i} \transform_i(\path_i)\mode_i\in L^2(0,\finalTime;\BS)$ for all $\coeff_i\in L^2(0,\finalTime)$ and all $\path_i\in H^1(0,\finalTime)$. 
\end{assumption}

In this case,  the directional derivative in direction $\boldsymbol{g}\in H^1(0,\finalTime;\R^{\ROMdim})$ is given as
\begin{equation}
	\label{eqn:directionalDerivative:path}
	\partial_{\pathVector,\boldsymbol{g}} J(\coeffVector,\pathVector,\modeVector) = \sum_{i=1}^\ROMdim \left \langle
	\sum_{j=1}^\ROMdim \coeff_j\transform_j(\path_j)\mode_j-z,\coeff_i \left[\tfrac{\partial}{\partial \path_{i}} \transform_i(\path_i)\mode_i\right] g_i\right\rangle_{\!\!\!L^2(0,\finalTime;\BS)}.
\end{equation}
\end{subequations}
Note that the Sobolev embedding theorems, see for instance \cite[Thm.~21.A.(d)]{Zei90a}, imply $g_i\in C([0,\finalTime]) \subseteq L^\infty(0,\finalTime)$, such that~\eqref{eqn:directionalDerivative:path} is defined.

\begin{theorem}
	\label{thm:gradient}
	Let the transformation operators satisfy Assumptions~\ref{ass:Transformation},~\ref{ass:transformedModesInL2}, and~\ref{ass:TransformationDerivative}.  Let $(\coeffVector,\modeVector,\pathVector)\in\mathscr{Z}$ and assume  
	\begin{subequations}
	\label{eqn:transformation:L2Linfty}
	\begin{align}
		\label{eqn:transformation:Linfty}
		\transform_i(\path_i)\mode_i &\in L^\infty(0,\finalTime;\BS),\\
		\label{eqn:transformation:operatorNormL2}\coeff_i\|\transform_i(\path_i)\| &\in L^2(0,\finalTime)
	\end{align}	
	\end{subequations}	
	for $i=1,\ldots,\ROMdim$, then the partial Fr\'echet derivatives of the cost functional $J$ (defined in~\eqref{eqn:costFunctional}) with respect to the coefficients, paths, and modes at $(\coeffVector,\pathVector,\modeVector)\in\mathscr{Z}$ are given by
	\begin{subequations}
	\begin{align}
		\label{eqn:frechet:coeff}
		\partial_{\coeffVector} J(\coeffVector,\pathVector,\modeVector)(\boldsymbol{d}) &\vcentcolon= \partial_{\coeffVector,\boldsymbol{d}} J(\coeffVector,\pathVector,\modeVector), &&\forall \boldsymbol{d}\in L^2(0,\finalTime;\R^\ROMdim),\\
		\label{eqn:frechet:path}\partial_{\pathVector} J(\coeffVector,\pathVector,\modeVector)(\boldsymbol{g}) &\vcentcolon= \partial_{\pathVector,\boldsymbol{g}} J(\coeffVector,\pathVector,\modeVector),&&\forall \boldsymbol{g}\in H^1(0,\finalTime;\R^{\ROMdim}),\\
		\label{eqn:frechet:mode}\partial_{\modeVector} J(\coeffVector,\pathVector,\modeVector)(\boldsymbol{h}) &\vcentcolon= \partial_{\modeVector,\boldsymbol{h}} J(\coeffVector,\pathVector,\modeVector),&&\forall \boldsymbol{h}\in \BSS^{\ROMdim},
	\end{align}
	\end{subequations}
	with directional derivatives as defined in~\eqref{eqn:directionalDerivative}.
\end{theorem}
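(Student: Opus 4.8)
The plan is to establish the three partial Fréchet derivatives one block of variables at a time: fixing two of $\coeffVector,\pathVector,\modeVector$ and varying the third, I will expand the square in $J=\tfrac12\|z-\beta(\coeffVector,\pathVector,\modeVector)\|_{L^2(0,\finalTime;\BS)}^2$ (with $\beta$ as in~\eqref{eqn:betaMapping}, which is meaningful since $z-\beta(\coeffVector,\pathVector,\modeVector)\in L^2(0,\finalTime;\BS)$ by $z\in L^2(0,\finalTime;\BSS)$ and Assumption~\ref{ass:transformedModesInL2}), identify the resulting linear term with the directional derivative from~\eqref{eqn:directionalDerivative}, verify that this term defines a \emph{bounded} linear functional on the corresponding direction space, and show that the remaining terms are $o$ of the norm of the perturbation. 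These three properties are exactly the statement that the directional derivative is the partial Fréchet derivative, so~\eqref{eqn:frechet:coeff}--\eqref{eqn:frechet:mode} follow.

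For the coefficients the increment $\beta(\coeffVector+\boldsymbol{d},\pathVector,\modeVector)-\beta(\coeffVector,\pathVector,\modeVector)=\sum_{i=1}^\ROMdim d_i\transform_i(\path_i)\mode_i$ is already linear in $\boldsymbol{d}$, and~\eqref{eqn:transformation:Linfty} gives $\big\|\sum_i d_i\transform_i(\path_i)\mode_i\big\|_{L^2(0,\finalTime;\BS)}\le\sum_i\|\transform_i(\path_i)\mode_i\|_{L^\infty(0,\finalTime;\BS)}\|d_i\|_{L^2(0,\finalTime)}$, so the increment map is bounded from $L^2(0,\finalTime;\R^\ROMdim)$ into $L^2(0,\finalTime;\BS)$. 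Expanding $J(\coeffVector+\boldsymbol{d},\pathVector,\modeVector)$ then yields $J(\coeffVector,\pathVector,\modeVector)$, the term~\eqref{eqn:directionalDerivative:coeff} (bounded in $\boldsymbol{d}$ by Cauchy--Schwarz and the above bound), and a term of order $\mathcal{O}(\|\boldsymbol{d}\|_{L^2(0,\finalTime;\R^\ROMdim)}^2)$; this proves~\eqref{eqn:frechet:coeff}. The mode variable is handled identically: $\beta(\coeffVector,\pathVector,\modeVector+\boldsymbol{h})-\beta(\coeffVector,\pathVector,\modeVector)=\sum_i\coeff_i\transform_i(\path_i)h_i$ is linear in $\boldsymbol{h}$, and using $\|\coeff_i(t)\transform_i(\path_i(t))h_i\|_{\BS}\le|\coeff_i(t)|\,\|\transform_i(\path_i(t))\|\,\|h_i\|_{\BS}$ together with~\eqref{eqn:transformation:operatorNormL2} and the continuity of the embedding $\BSS\hookrightarrow\BS$ shows boundedness from $\BSS^\ROMdim$ into $L^2(0,\finalTime;\BS)$, so the remainder is again quadratic and~\eqref{eqn:frechet:mode} follows.

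The path variable is the only one where $\beta$ depends nonlinearly on the perturbation, and here Assumption~\ref{ass:TransformationDerivative} enters. Fixing $\coeffVector,\modeVector$ and $\boldsymbol{g}\in H^1(0,\finalTime;\R^\ROMdim)$, Taylor's formula with integral remainder, applied pointwise in $t$, gives $\transform_i\big((\path_i+g_i)(t)\big)\mode_i=\transform_i(\path_i(t))\mode_i+g_i(t)\big[\tfrac{\partial}{\partial\path_i}\transform_i(\path_i(t))\mode_i\big]+R_i(t)$ with $\|R_i(t)\|_{\BS}\le|g_i(t)|\,\omega_i(t,g_i(t))$, where $\omega_i(t,s):=\sup_{\tau\in[0,1]}\big\|\tfrac{\partial}{\partial\path_i}\transform_i(\path_i(t)+\tau s)\mode_i-\tfrac{\partial}{\partial\path_i}\transform_i(\path_i(t))\mode_i\big\|_{\BS}$; measurability of $R_i$ in $t$ is routine since all ingredients are continuous in $t$. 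Hence $\beta(\coeffVector,\pathVector+\boldsymbol{g},\modeVector)=\beta(\coeffVector,\pathVector,\modeVector)+L\boldsymbol{g}+\rho$ with $L\boldsymbol{g}:=\sum_i\coeff_i\big[\tfrac{\partial}{\partial\path_i}\transform_i(\path_i)\mode_i\big]g_i$ and $\rho:=\sum_i\coeff_i R_i$. By the Sobolev embedding $H^1(0,\finalTime)\hookrightarrow C([0,\finalTime])\subseteq L^\infty(0,\finalTime)$ and the integrability assumed in Assumption~\ref{ass:TransformationDerivative}, $\|L\boldsymbol{g}\|_{L^2(0,\finalTime;\BS)}\le\sum_i\|g_i\|_{L^\infty(0,\finalTime)}\big\|\coeff_i\tfrac{\partial}{\partial\path_i}\transform_i(\path_i)\mode_i\big\|_{L^2(0,\finalTime;\BS)}\le c\,\|\boldsymbol{g}\|_{H^1(0,\finalTime;\R^\ROMdim)}$, so $L$ is a bounded linear operator and $\langle\beta(\coeffVector,\pathVector,\modeVector)-z,L\boldsymbol{g}\rangle_{L^2(0,\finalTime;\BS)}$ is exactly the bounded functional~\eqref{eqn:directionalDerivative:path}. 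Expanding the square, it then remains to show $\langle\beta(\coeffVector,\pathVector,\modeVector)-z,\rho\rangle_{L^2}+\tfrac12\|L\boldsymbol{g}+\rho\|_{L^2}^2=o(\|\boldsymbol{g}\|_{H^1})$, which by the $L$-bound reduces to the single estimate $\|\rho\|_{L^2(0,\finalTime;\BS)}=o(\|\boldsymbol{g}\|_{H^1})$.

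This last estimate is the main obstacle. From the remainder bound, $\|\coeff_i R_i\|_{L^2(0,\finalTime;\BS)}\le\|g_i\|_{L^\infty(0,\finalTime)}\big(\int_0^\finalTime|\coeff_i(t)|^2\,\omega_i(t,g_i(t))^2\dt\big)^{1/2}$, and since $\|g_i\|_{L^\infty}\le c\,\|\boldsymbol{g}\|_{H^1}$ it suffices, reducing to sequences, to show that the integral tends to $0$ as $\|\boldsymbol{g}\|_{H^1}\to0$. For a.e.\ fixed $t$ one has $g_i(t)\to0$ (uniformly in $t$, again by the embedding), and continuity of $\path_i\mapsto\tfrac{\partial}{\partial\path_i}\transform_i(\path_i)\mode_i$ forces $\omega_i(t,g_i(t))\to0$; a dominating function is available because $\path_i\in C([0,\finalTime])$ is bounded, so for small $\|\boldsymbol{g}\|_{H^1}$ the arguments $\path_i(t)+\tau g_i(t)$ stay in a fixed compact interval on which the continuous map $\tfrac{\partial}{\partial\path_i}\transform_i(\cdot)\mode_i$ is bounded by some $K_i$, whence $|\coeff_i(t)|^2\omega_i(t,g_i(t))^2\le(2K_i)^2|\coeff_i(t)|^2\in L^1(0,\finalTime)$. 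Dominated convergence then finishes the estimate, and assembling the three cases completes the proof. Note that the argument genuinely uses the \emph{continuity} of the path-derivative required in Assumption~\ref{ass:TransformationDerivative} as well as the regularity $\pathVector\in H^1$ (through $H^1\hookrightarrow L^\infty$), the failure of which is precisely the phenomenon exhibited in Example~\ref{ex:counterExampleDirectionalDerivative}.
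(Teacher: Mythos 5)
Your proposal is correct and follows essentially the same route as the paper: expand the square, identify the linear term with the directional derivatives in~\eqref{eqn:directionalDerivative}, bound the quadratic remainders for $\coeffVector$ and $\modeVector$ via~\eqref{eqn:transformation:Linfty} and~\eqref{eqn:transformation:operatorNormL2}, and isolate the Taylor remainder $f_i$ (your $R_i$) for the path variable using the embedding $H^1(0,\finalTime)\hookrightarrow L^\infty(0,\finalTime)$. The only difference is one of detail rather than of method: where the paper asserts the vanishing of the path remainder directly ``from Assumption~\ref{ass:TransformationDerivative}'', you make this step explicit via a modulus-of-continuity bound and dominated convergence with the dominating function $4K_i^2|\coeff_i|^2$, which is a welcome sharpening of the same argument.
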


\begin{proof}
	It suffices to show that $J$ is partially Fr\'echet differentiable with respect to the coefficients, paths, and modes. 
	Let $(\coeffVector,\pathVector,\modeVector),(\boldsymbol{d},\boldsymbol{g},\boldsymbol{h})\in\mathscr{Z}$.  Using~\eqref{eqn:transformation:Linfty} we obtain
	\begin{align*}
		J(\coeffVector&+\boldsymbol{d},\pathVector,\modeVector)-J(\coeffVector,\pathVector,\modeVector) - \partial_{\coeffVector,\boldsymbol{d}} J(\coeffVector,\pathVector,\modeVector)
		= \frac{1}{2} \left\|\sum_{i=1}^\ROMdim d_i\transform_i(\path_i)\mode_i\right\|_{L^2(0,\finalTime;\BS)}^2\\
		&\leq \frac{1}{2}\max_{i=1,\ldots,\ROMdim} \|\transform_i(\path_i)\mode_i\|_{L^\infty(0,\finalTime;\BS)}^2\left(\sum_{i=1}^\ROMdim \|d_i\|_{L^2(0,\finalTime)}\right)^2\\
		&\leq  \frac{\ROMdim^2}{2} \max_{i=1,\ldots,\ROMdim} \|\transform_i(\path_i)\mode_i\|_{L^\infty(0,\finalTime;\BS)}^2 \|\boldsymbol{d}\|_{L^2(0,\finalTime;\R^{\ROMdim})}^2
	\end{align*}
	and thus
	\begin{align*}
		\lim_{\|\boldsymbol{d}\|_{L^2(0,\finalTime;\R^{\ROMdim})}\to0} \frac{\left| J(\coeffVector+\boldsymbol{d},\pathVector,\modeVector)-J(\coeffVector,\pathVector,\modeVector) - \partial_{\coeffVector,\boldsymbol{d}} J(\coeffVector,\pathVector,\modeVector)\right|}{\|\boldsymbol{d}\|_{L^2(0,\finalTime;\R^{\ROMdim})}}  = 0.
	\end{align*}
	We conclude that $J$ is Fr\'echet differentiable with respect to the coefficients with Fr\'echet derivative as in~\eqref{eqn:frechet:coeff}. 
	For the partial derivative with respect to the modes we obtain
	\begin{align*}
		J(\coeffVector&,\pathVector,\modeVector+\boldsymbol{h})-J(\coeffVector,\pathVector,\modeVector) - \partial_{\modeVector,\boldsymbol{h}} J(\coeffVector,\pathVector,\modeVector)
		= \frac{1}{2} \left\|\sum_{i=1}^\ROMdim \coeff_i\transform_i(\path_i)h_i\right\|_{L^2(0,\finalTime;\BS)}^2\\
		&\leq \frac{1}{2} \int_0^\finalTime \left(\sum_{i=1}^\ROMdim |\coeff_i(t)|\|\transform_i(\path_i(t))\|\|h_i\|_{\BS}\right)^2 \dt\\
		&\leq \frac{\|\boldsymbol{h}\|_{\BS^{\ROMdim}}^2}{2} \int_0^\finalTime \left(\sum_{i=1}^\ROMdim |\coeff_i(t)|\|\transform_i(\path_i(t))\|\right)^2 \dt.
	\end{align*}	
	Using~\eqref{eqn:transformation:operatorNormL2}, we observe that the integral is finite. Similarly as before, we thus conclude that $J$ is Fr\'echet differentiable with respect to the modes with Fr\'echet derivative as in~\eqref{eqn:frechet:mode}.  We conclude our proof for the partial derivative with respect to the path variable.  
	Note that the Sobolev embedding theorem \cite[Thm~4.12, Part~I, Case A]{AdaF03} implies that the Sobolev space $H^1(0,\finalTime)$ is continuously embedded into the space $L^\infty(0,\finalTime)$, i.e., there exists a constant $\gamma>0$ independent of $g_i$, such that $\|g_i\|_{L^\infty(0,\finalTime)} \leq \gamma \|g_i\|_{H^1(0,\finalTime)} $.  
	We define 
	\begin{displaymath}
		f_i(\path_i,\mode_i,g_i) \vcentcolon= \transform_i(\path_i+g_i)\mode_i - \transform_i(\path_i)\mode_i - \left[\tfrac{\partial}{\partial \path_i} \transform_i(\path_i)\right]g_i.
	\end{displaymath}	
	For $g_i\equiv 0$ we have $f_i(\path_i,\mode_i,g_i) = 0$ for almost all $t\in(0,T)$. For $\|g_i\|_{H^1(0,\finalTime)}\neq 0$, let us define $\widehat{\mathbb{T}}_i \vcentcolon= \{t\in(0,T) \mid g_i(t)\neq 0\}$.
	Then
	\begin{multline*}
		\frac{\left\langle z,\coeff_i f_i(\path_i,\mode_i,g_i)\right \rangle_{L^2(0,\finalTime;\BS)}}{\|g_i\|_{H^1(0,\finalTime)}}  
		\leq \gamma \int_{\widehat{\mathbb{T}}_i} \coeff_i(t) \left\langle
		z(t),\frac{f_i(\path_i(t),\mode_i,g_i(t))}{|g_i(t)|} \right\rangle_{\BS} \dt.
	\end{multline*}
	From Assumption~\ref{ass:TransformationDerivative} we conclude
	\begin{displaymath}
		\lim_{\|g_i\|_{H^1(0,\finalTime)}\to0} \frac{ \left\langle z,\coeff_i f_i(\path_i,\mode_i,g_i)\right \rangle_{L^2(0,\finalTime;\BS)}}{\|g_i\|_{H^1(0,\finalTime)}} = 0,
	\end{displaymath}
	and thus 
	\begin{displaymath}
		\lim_{\|\boldsymbol{g}\|_{H^1(0,\finalTime;\R^\ROMdim)}\to0}	\frac{\sum_{i=1}^\ROMdim \left\langle z, \coeff_i f_i(\path_i,\mode_i,g_i)\right \rangle_{L^2(0,\finalTime;\BS)}}{\|\boldsymbol{g}\|_{H^1(0,\finalTime;\R^\ROMdim)}}  = 0.
	\end{displaymath}
	Furthermore, using $\beta$ as defined in~\eqref{eqn:betaMapping}, we obtain
	\begin{align*}
		\tfrac{1}{2}\|\beta(\coeffVector,\pathVector&+\boldsymbol{g},\modeVector)\|_{L^2(0,\finalTime;\BS)}^2 - \tfrac{1}{2}\left\|\beta(\coeffVector,\pathVector,\modeVector)\right\|_{L^2(0,\finalTime;\BS)}^2\\
		&\phantom{=}\qquad  - \sum_{j=1}^\ROMdim \left\langle \beta(\coeffVector,\pathVector,\modeVector), \coeff_j \left[\tfrac{\partial}{\partial \path_j} \transform_j(\path_j)\mode_j\right]g_j\right\rangle_{L^2(0,\finalTime;\BS)}\\
		&= \tfrac{1}{2}\left\|\beta(\coeffVector,\pathVector+\boldsymbol{g},\modeVector) - \beta(\coeffVector,\pathVector,\modeVector) + \beta(\coeffVector,\pathVector,\modeVector)\right\|_{L^2(0,\finalTime;\BS)}^2\\
		&\phantom{=}\qquad - \tfrac{1}{2}\left\|\beta(\coeffVector,\pathVector,\modeVector)\right\|_{L^2(0,\finalTime;\BS)}^2\\
		&\phantom{=}\qquad  - \sum_{j=1}^\ROMdim\left\langle \beta(\coeffVector,\pathVector,\modeVector), \coeff_j \left[\tfrac{\partial}{\partial \path_j} \transform_j(\path_j)\mode_j\right]g_j\right\rangle_{L^2(0,\finalTime;\BS)}\\
		&= \tfrac{1}{2} \|\beta(\coeffVector,\pathVector+\boldsymbol{g},\modeVector) - \beta(\coeffVector,\pathVector,\modeVector)\|_{L^2(0,\finalTime;\BS)}^2\\
		&\phantom{=}\qquad + \sum_{j=1}^\ROMdim\big\langle \beta(\coeffVector,\pathVector,\modeVector), \coeff_j f_j(\path_j,\mode_j,g_j)\big\rangle_{L^2(0,\finalTime;\BS)}.
	\end{align*}
	Similarly as before, we obtain
	\begin{gather*}
		\lim_{\|\boldsymbol{g}\|_{H^1(0,\finalTime;\R^\ROMdim)}\to0}	\frac{1}{2}\frac{\|\beta(\coeffVector,\pathVector+\boldsymbol{g},\modeVector) - \beta(\coeffVector,\pathVector,\modeVector)\|_{L^2(0,\finalTime;\BS)}^2}{\|\boldsymbol{g}\|_{H^1(0,\finalTime;\R^\ROMdim)}}  = 0,\\
		\lim_{\|\boldsymbol{g}\|_{H^1(0,\finalTime;\R^\ROMdim)}\to0}	\frac{\big\langle \beta(\coeffVector,\pathVector,\modeVector),\sum_{j=1}^\ROMdim \coeff_j f_j(\path_j,\mode_j,g_j)\big\rangle_{L^2(0,\finalTime;\BS)}}{\|\boldsymbol{g}\|_{H^1(0,\finalTime;\R^\ROMdim)}}   =0.
	\end{gather*}
	Combining the previous results, we thus infer 
	\begin{align*}
		\lim_{\|\boldsymbol{g}\|_{H^1(0,\finalTime;\R^\ROMdim)}\to0} \frac{|J(\coeffVector,\pathVector+\boldsymbol{g},\modeVector)-J(\coeffVector,\pathVector,\modeVector) - \partial_{\pathVector,\boldsymbol{g}} J(\coeffVector,\pathVector,\modeVector)|}{\|\boldsymbol{g}\|_{H^1(0,\finalTime;\R^\ROMdim)}}  = 0,
	\end{align*}
	which concludes the proof.
	\qed
\end{proof}

\begin{remark}
	If the family of transformation operators is uniformly bounded, i.e., there exists some $\overline{C}>0$ such that
	\begin{displaymath}
		\|\transform_i(\path_i)\|\leq \overline{C}\qquad\text{for all } \path_i\in\R,
	\end{displaymath}	
	then it is easy to see that condition~\eqref{eqn:transformation:L2Linfty} is satisfied. Note that in this case Assumption~\ref{ass:transformedModesInL2} is also satisfied, cf.~\cite[Lem~4.2]{BlaSU20}. An example for such a family of operators is (again) the periodic shift operator.
\end{remark}

Let us emphasize that it is essential for the directional derivative $\partial_{\pathVector,\boldsymbol{g}} J$ to have the path variable and associated directions in $H^1(0,\finalTime;\R^{\ROMdim})$. The following example details that if we take a direction in $L^2(0,\finalTime;\R^{\ROMdim})$, then the directional derivative may not be finite.

\begin{example}
	\label{ex:counterExampleDirectionalDerivative}
	Consider the shift operator $\transform(\path)\mode = \mode(\cdot-\path)$ with periodic embedding into the spaces $\BS \vcentcolon= L^2(0,2\pi)$ and $\BSS \vcentcolon= H^1_\mathrm{per}(0,2\pi)$, cf.~\cite[Ex.~4.3 and 5.12]{BlaSU20}.  
	It is well-known, that the shift operator is a semi-group with generator $-\tfrac{\partial}{\partial x} $, see for instance~\cite[Sec.~II.2.10]{EngN00}. 
	Let $z(t,x) = t^{-1/3}\cos(x)$, $\ROMdim=1$, $\path_1 \equiv 0$, and $\mode_1(x) = \sin(x)$.
	Then for any $\coeff_1,g_1\in L^2(0,\finalTime)$, we obtain
	\begin{align*}
		\partial_{\pathVector,\boldsymbol{g}} J(\coeff_1,\path_1,\mode_1) &= - \left\langle \coeff_1\mode_1-z,\coeff_1\tfrac{\partial}{\partial x}  \mode_1 g_1\right\rangle_{L^2(0,\finalTime;\BS)}\\
		&= \langle z,\coeff_1\tfrac{\partial}{\partial x} \mode_1g_1\rangle_{L^2(0,\finalTime;\BS)} = \|\cos\|^2_{\BS}\int_0^\finalTime t^{-1/3}\coeff_1(t)g_1(t) \dt.
	\end{align*}
	We notice that for $\coeff_1(t) = g_1(t) = t^{-1/3}$ we have $\coeff_1,g_1\in L^2(0,\finalTime)$ but the product $t^{-1/3}\coeff_1g_1$ is not in $L^1(0,\finalTime)$.  
	We conclude $\partial_{\pathVector,\boldsymbol{g}} J(\coeffVector,\pathVector,\modeVector)\not\in\R$.
\end{example}

\begin{remark}
	\label{rem:pathParameterization}
	To ensure $\pathVector\in H^1(0,\finalTime;\R^{\ROMdim})$ during a (numerical) optimization, we may choose a suitable low-dimensional subspace with continuously differentiable basis functions, such as the space of polynomials with given maximal degree.  The associated gradient is easily computed from Theorem~\ref{thm:gradient} via the chain rule. Besides the reduced computational cost, such an approach yields an interpretable representation for the wave speeds. We refer to \cite{MenBALK20} for a similar idea in a fully discretized setting.
\end{remark}

\section{Discretization}
\label{sec:discretization}

Towards a numerical implementation, we derive discretized versions of the partial derivatives from Theorem~\ref{thm:gradient}. To shorten notation, we introduce for $(\coeffVector,\pathVector,\modeVector)\in\mathscr{Z}$ and $\boldsymbol{h}\in\BSS^{\ROMdim}$ the quantities
\begin{subequations}
	\label{eqn:innerProductQuantities}
	\begin{align}
		v_i(\coeffVector,\pathVector,\modeVector) &\vcentcolon= \bigg\langle \sum_{j = 1}^\ROMdim \coeff_j \transform_j(\path_j)\mode_j-z, \transform_{i}(\path_i)\mode_i\bigg\rangle_{\!\!\!\BS},\\
		\xi_i(\coeffVector,\pathVector,\modeVector) &\vcentcolon= \bigg\langle \sum_{j = 1}^\ROMdim \coeff_j \transform_j(\path_j)\mode_j - z, \coeff_i\left[ \tfrac{\partial}{\partial \path_i} \transform_i(\path_i)\mode_i \right]\bigg\rangle_{\!\!\!\BS},\\
		\mu_i(\coeffVector,\pathVector, \modeVector, \boldsymbol{h}) &\vcentcolon= \bigg\langle \sum_{j =
		1}^\ROMdim \coeff_j\transform_j(\path_j)\mode_j -
		z,\coeff_i\transform_i(\path_i)h_i\bigg\rangle_{\!\!\!\BS},
	\end{align}
\end{subequations}
for $i=1,\ldots,\ROMdim$.

We start our exposition with the discretization with respect to time. To this end,  consider a time grid $0 =t_0 < t_1 < \ldots < t_m = \finalTime$
and associated quadrature rule defined by weights $\omega_\ell\geq 0$ for $\ell=0,\ldots,m$. The approximation of the directional derivative of $J$ with respect to $\coeffVector$ is thus given by
\begin{align*}
	\partial_{\coeffVector,\boldsymbol{d}} J (\coeffVector,\pathVector,\modeVector) 
	&= \sum_{i = 1}^\ROMdim \int_0^\finalTime v_i(\coeffVector(t),\pathVector(t),\modeVector) d_i(t) \dt\\
	&\approx \sum_{i = 1}^\ROMdim \sum_{k = 0}^m w_k v_i(\coeffVector(t_k),\pathVector(t_k),\modeVector) d_i(t_{k}) \\
%	&= \sum_{i = 1}^{r} \left[
%	v_{i}(\coeffVector(t_{0})) \cdots v_{i}(\coeffVector(t_{m})) \right] W \left[ d_{i}(t_{0}) \cdots d_{i}(t_{m}) \right]^{\top} \nonumber \\
%	&=
%	\left[ v_{1}(\coeffVector(t_{0})) \cdots v_{1}(\coeffVector(t_{m})) \; v_{2}(\coeffVector(t_{0})) \cdots v_{r}(\coeffVector(t_{m})) \right]
%	\left( I_{r} \otimes W \right) \boldsymbol{d}^m,
	&= (\boldsymbol{v}^m(\coeffVector,\pathVector,\modeVector))^\top	\left( I_\ROMdim \otimes W \right) \boldsymbol{d}^m,
\end{align*}
where $\otimes$ is the Kronecker product and 
\begin{align*}
	W &\vcentcolon= \mathrm{diag} \left( w_{0}, \ldots, w_{m} \right) \in \R^{(m+1) \times (m+1)},\\
	\boldsymbol{v}_i^m(\coeffVector,\pathVector,\modeVector) &\vcentcolon=  \begin{bmatrix}
		v_i(\coeffVector(t_0),\pathVector(t_0),\modeVector) & \cdots v_i(\coeffVector(t_m),\pathVector(t_m),\modeVector)
	\end{bmatrix}^\top \in \R^{m+1},\\
	\boldsymbol{v}^m(\coeffVector,\pathVector,\modeVector) &\vcentcolon= \begin{bmatrix}
		\boldsymbol{v}_1^m(\coeffVector,\pathVector,\modeVector)^\top & \cdots &
		\boldsymbol{v}_\ROMdim^m(\coeffVector,\pathVector,\modeVector)^{\top}
	\end{bmatrix}^\top \in \R^{\ROMdim(m+1)},\\
	\boldsymbol{d}_i^m &\vcentcolon=  
	\begin{bmatrix}
		d_i(t_0) & \cdots d_i(t_m)
	\end{bmatrix}
	^\top \in \R^{m+1},\\
	\boldsymbol{d}^m &\vcentcolon= \begin{bmatrix}
		(\boldsymbol{d}_1^m)^{\top} & \cdots & (\boldsymbol{d}_\ROMdim^m)^{\top}
	\end{bmatrix}^\top \in \R^{\ROMdim (m+1)}.
\end{align*}
The time-discrete approximation of the partial derivative is thus given by
\begin{equation*}
	\label{eqn:variationCoefficientTimeDiscrete}
	\partial_{\coeffVector} J^m(\coeffVector,\pathVector,\modeVector) \vcentcolon= (\boldsymbol{v}^m(\coeffVector,\pathVector,\modeVector))^\top \left( I_\ROMdim \otimes W \right) \in \R^{1\times \ROMdim(m+1)}.
\end{equation*}
Analogously, the time-discrete approximation of the partial derivative of $J$ with respect to the path variables, i.e., $\partial_{\pathVector} J(\coeffVector,\pathVector,\modeVector)$, is given by
\begin{equation*}
	\label{eqn:variationPathTimeDiscrete}
	\partial_{\pathVector} J^m(\coeffVector,\pathVector,\modeVector) \vcentcolon=
	(\boldsymbol{\xi}^m(\coeffVector,\pathVector,\modeVector))^\top \left( I_\ROMdim \otimes W \right) \in
	\R^{1\times \ROMdim(m+1)},
\end{equation*}
with  $\boldsymbol{\xi}^m(\coeffVector,\pathVector,\modeVector)$ defined analogously as $\boldsymbol{v}^m(\coeffVector,\pathVector,\modeVector)$.
In the same fashion, we obtain the time-discrete approximation for the directional derivative with respect to the mode variables as
\begin{align*}
	\label{eqn:variationModeTimeDiscrete}
	\partial_{\modeVector,\boldsymbol{h}} J^m(\coeffVector,\pathVector,\modeVector) \vcentcolon=  (\boldsymbol{\mu}^m(\coeffVector,\pathVector,\modeVector,\boldsymbol{h}))^\top \left( I_\ROMdim \otimes W \right) \mathbf{1}_{\ROMdim(m+1)}\in \R,
\end{align*}
where we denote by $\mathbf{1}_{r(m+1)} \in \R^{r(m+1)}$ the vector with all entries equal to $1$, and $\boldsymbol{\mu}^m$ defined analogously as $\boldsymbol{v}^m$. 

For the spatial discretization, let $\BSS_n$ denote an $n$-dimensional subspace of $\BSS$ with basis functions $\psi_1,\ldots,\psi_n\in\BSS$. Let us define for $i,j=1,\ldots,\ROMdim$ and $\path_i,\path_j\in\R$ the matrices $M_{i,j}, N_{i,j},F_{i},G_{i}\in \R^{n \times n}$ via
\begin{subequations}
	\label{eqn:pathDependentInnerProducts}
	\begin{align}
		\left[ M_{i,j} (\path_i,\path_j)\right]_{k,\ell} &\vcentcolon= \langle \transform_j(\path_j) \psi_k, \transform_i(\path_i)\psi_\ell \rangle_{\BS}, \\
		\left[ N_{i,j}(\path_i,\path_j) \right]_{k,\ell} &\vcentcolon= \langle \transform_j(\path_j) \psi_k, \tfrac{\partial}{\partial \path_i}\transform_i(\path_i)\psi_\ell \rangle_{\BS}, \\
		\left[ F_{i}(\path_i) \right]_{k,\ell} &\vcentcolon= \langle \psi_k, \transform_i(\path_i) \psi_\ell \rangle_{\BS},\\
		\left[ G_{i}(\path_i) \right]_{k,\ell} &\vcentcolon= \langle \psi_k, \tfrac{\partial}{\partial \path_i}
		\transform_i(\path_i)\psi_{\ell} \rangle_{\BS},
	\end{align}
\end{subequations}
for $k,\ell=1,\ldots,\ROMdim$. 
For the data $z\in L^2(0,\finalTime;\BSS)$, the modes $\mode_i\in\BSS$, and directions $h_i\in\BSS$, we consider the approximations
\begin{displaymath}
	z(t) \approx \sum_{\ell=1}^n \widehat{z}_\ell(t)\psi_\ell,\qquad
	\mode_i \approx \sum_{\ell=1}^n \widehat{\mode}_{i,\ell} \psi_\ell,\qquad
	h_i \approx \sum_{\ell=1}^n \widehat{h}_{i,\ell} \psi_\ell,
\end{displaymath}
with
\begin{gather*}
	\boldsymbol{\widehat{z}}(t) \vcentcolon= [\widehat{z}_1(t)\; \cdots\; \widehat{z}_n(t)]^\top\in\R^n, \qquad\qquad
	\boldsymbol{\widehat{\mode}}_i \vcentcolon= [\widehat{\mode}_{i,1}\; \cdots\; \widehat{\mode}_{i,n}]^{\top}\in\R^n,\\
	\boldsymbol{\widehat{h}}_i \vcentcolon= [\widehat{h}_{i,1}\; \cdots\; \widehat{h}_{i,n}]^{\top}\in\R^n.
\end{gather*}
With these preparations, we obtain the spatial approximation of the inner products in~\eqref{eqn:innerProductQuantities} as
\begin{subequations}
\begin{align*}
	\widehat{v}_i(\coeffVector,\pathVector,\modeVector) &\vcentcolon= \bigg(\sum_{j=1}^{\ROMdim} \coeff_j \boldsymbol{\widehat{\mode}}_j^\top M_{i,j}(\path_i,\path_j) - \widehat{\boldsymbol{z}}^\top F_i(\path_i) \bigg)\boldsymbol{\widehat{\mode}}_i\in\R,\\
	\widehat{\xi}_i(\coeffVector,\pathVector,\modeVector) &\vcentcolon= \coeff_i\bigg(\sum_{j=1}^{\ROMdim} \boldsymbol{\widehat{\mode}}_j^\top N_{i,j}(\path_i,\path_j) - \widehat{\boldsymbol{z}}^\top G_i(\path_i) \bigg)\boldsymbol{\widehat{\mode}}_i\in\R,\\
	\widehat{\mu}_i(\coeffVector,\pathVector,\modeVector) &\vcentcolon= \coeff_i\bigg(\sum_{j=1}^{\ROMdim}
	\coeff_j \boldsymbol{\widehat{\mode}}_j^\top M_{i,j}(\path_i,\path_j) - \widehat{\boldsymbol{z}}^\top
	F_i(\path_i) \bigg)\in\R^{1 \times n}.
\end{align*}
\end{subequations}
We thus obtain the space- and time-discretized partial derivatives as
\begin{align*}
	\partial_{\coeffVector} \widehat{J}^m(\coeffVector,\pathVector,\modeVector) &\vcentcolon= (\widehat{\boldsymbol{v}}^m(\coeffVector,\pathVector,\modeVector))^\top \left( I_\ROMdim \otimes W \right) \in \R^{1\times (m+1)\ROMdim},\\
	\partial_{\pathVector} \widehat{J}^m(\coeffVector,\pathVector,\modeVector) &\vcentcolon= (\widehat{\boldsymbol{\xi}}^m(\coeffVector,\pathVector,\modeVector))^\top \left( I_\ROMdim \otimes W \right) \in \R^{1\times (m+1) \ROMdim},\\
	\partial_{\modeVector} \widehat{J}^m(\coeffVector,\pathVector,\modeVector) &\vcentcolon=  \mathbf{1}_{\ROMdim(m+1)}^\top \left( I_\ROMdim \otimes W \right)\mathcal{M}\in \R^{1\times n\ROMdim},
\end{align*}
with $\mathcal{M} \vcentcolon= \mathrm{blkdiag}(\widehat{\mu}_1^m(\coeffVector,\pathVector,\modeVector),\ldots,\widehat{\mu}_\ROMdim^m(\coeffVector,\pathVector,\modeVector))$.

We conclude this section with a specific computation of the quantities depending on the inner products for the periodic shift operator and $P_1$ finite elements.

\begin{example}
	\label{ex:innerProducts}
	Let us assume we have a one-dimensional domain $\Omega=(0,1)$ and a corresponding equidistant grid of step size $h\vcentcolon=\tfrac{1}{n}$.
	We discretize $\BSS = H^1_{\mathrm{per}}(\Omega)$ via periodic $P_1$ finite element functions.
	For $\BS = L^2(0,1)$ and shift operator with periodic embedding, we observe 
	\begin{equation*}
		M_{i,j}(\path_i,\path_j) = F_i(\path_i-\path_j)\qquad \text{and}\qquad N_{i,j}(\path_i,\path_j) = G_i(\path_i-\path_j).
	\end{equation*}
	For $\path_i = qh + \tilde{\path}_i$ with $q\in\Z$ and $\tilde{\path}_i\in[0,h)$ we obtain 
	\begin{align*}
		\resizebox{.99\linewidth}{!}{$
		\langle \psi_k,\transform_i(\path_i)\psi_\ell\rangle = \begin{cases}
			\tfrac{1}{h^2}\left(\tfrac{2}{3}(h-\tilde{p}_i)^3+\tilde{\path}_i(h-\tilde{\path}_i)^2+\tilde{\path}_ih(h-\tilde{\path}_i)+\tfrac16\tilde{\path}_i^3\right), & \text{if } \ell = k-q,\\
			 \tfrac{1}{6h^2}(h-\tilde{\path}_i)^3, & \text{if } \ell = k-q+1,\\
			 \tfrac{1}{h^2}\left(\tfrac{1}{6}(h-\tilde{\path}_i)^3 - \tfrac{1}{3}\tilde{\path}_i^3 + h^2\tilde{\path}_i\right),  & \text{if } \ell = k-q-1,\\			 
			 \tfrac{1}{6h^2} \tilde{\path}_i^3, & \text{if } \ell = k-q-2,\\
			 0, & \text{otherwise}.
		\end{cases}$}
	\end{align*}
	For further details, including the computation of $\langle \psi_k,\tfrac{\partial}{\partial
	\path_i}\transform_i(\path_i)\psi_\ell\rangle_\BS$, we refer to Appendix~\ref{sec:shiftedInnerProducts}.
\end{example}

\section{Numerical examples}
\label{sec:examples}

For our numerical examples, we use an equidistant time grid $t_i \vcentcolon= i\tau $ with step size $\tau>0$. 
The weights for the time integration are chosen based on the trapezoidal rule.
For the discretization in space, we also use an equidistant grid and follow Example~\ref{ex:innerProducts} for approximating the inner products occurring in the cost functional and the gradient.
The optimization itself is carried out with the \matlab package GRANSO with default settings, see \cite{CurMO17}.
The GRANSO algorithm is based on a quasi-Newton solver and can handle non-convex, non-smooth, constrained optimization problems.
For computing the relative $L^2$ errors of the approximations, we discretized the corresponding integrals by the trapezoidal rule.

For notational convenience, we assumed so far that there is exactly one mode per transformation operator. In practice, it is often more reasonable to cluster the modes into different reference frames, see, for instance, \cite[sec.~7.1]{BlaSU20}.
Thus, we use the clustered approximation ansatz
\begin{equation}
	\label{eqn:clusteredApproximation}
	z \approx \sum_{\rho=1}^q \sum_{i = 1}^{r_\rho}  \coeff_{\rho,i} \transform_{\rho} \left( \path_{\rho} \right) \mode_{\rho,i}
\end{equation}
for the following numerical experiments and emphasize that this only requires a minor and straightforward modification of the gradient.
We denote the approximation based on our optimization results with sPOD, not to be confused with the original sPOD algorithm from \cite{ReiSSM18}. 
Furthermore, we use dashed lines in the plots to display the (optimized) path variables.

\subsection{Viscous Burgers' equation}\label{sec:Burgers}

We consider the one-dimensional viscous Burgers' equation
\begin{align}
	\label{eqn:Burgers}
	\tfrac{\partial}{\partial t} z(t,x) &= \tfrac{1}{\mathrm{Re}} \tfrac{\partial^2}{\partial x^2} z(t,x) - z(t,x) \tfrac{\partial}{\partial x} z(t,x),  & (t,x)\in (0,2)\times(0,1),
\end{align}
and, following \cite{MauLB21}, use the analytical solution
\begin{align*}
	z(t,x) = \frac{x}{t+1} \bigg(1 + \sqrt{\tfrac{t+1}{\exp \left( \tfrac{\mathrm{Re}}{8} \right)}} \exp \left( \mathrm{Re}\tfrac{ x^2}{4t+4}\right)\!\!\!\bigg)^{\!\!-1}
\end{align*}
with Reynolds number  $\mathrm{Re} = 1000$ for our experiment.
We test our algorithm with data obtained from the analytical solution, evaluated on a grid with $100$ equidistant intervals in space and time, respectively.
We optimize for an approximation with a single frame, i.e., $q=1$
in~\eqref{eqn:clusteredApproximation}, and $\ROMdim_1 = \ROMdim = 2$ modes, 
and supply the first snapshot and the zero vector as starting values for the modes.
We initialize the corresponding coefficients as a constant function with value $1$. For the path, we start with a straight line given by $p(t) = \frac{37}{200} t$, evaluated at the time grid
points. The results are depicted in Figure~\ref{fig:result_burgers}, detailing that already with $\ROMdim=2$, an accurate approximation with a relative $L^2$ error of less than $\si{3\percent}$ can be achieved, while the POD approximation is not able to reproduce the shock front.
\begin{figure}
	\centering
	\begin{subfigure}[t]{.32\linewidth}
		\centering
		\begin{tikzpicture}
			\pgfplotsset{width=5cm,height=5cm}
			\begin{groupplot}[group style={group size=1 by 1, horizontal sep=2.5cm}]
				\nextgroupplot[enlargelimits=false,axis on top=false,xlabel=$x$,ylabel=$t$,ylabel style={rotate=-90},xtick=\empty,ytick=\empty]
				\addplot graphics [xmin=0,xmax=500,ymin=0,ymax=1000]{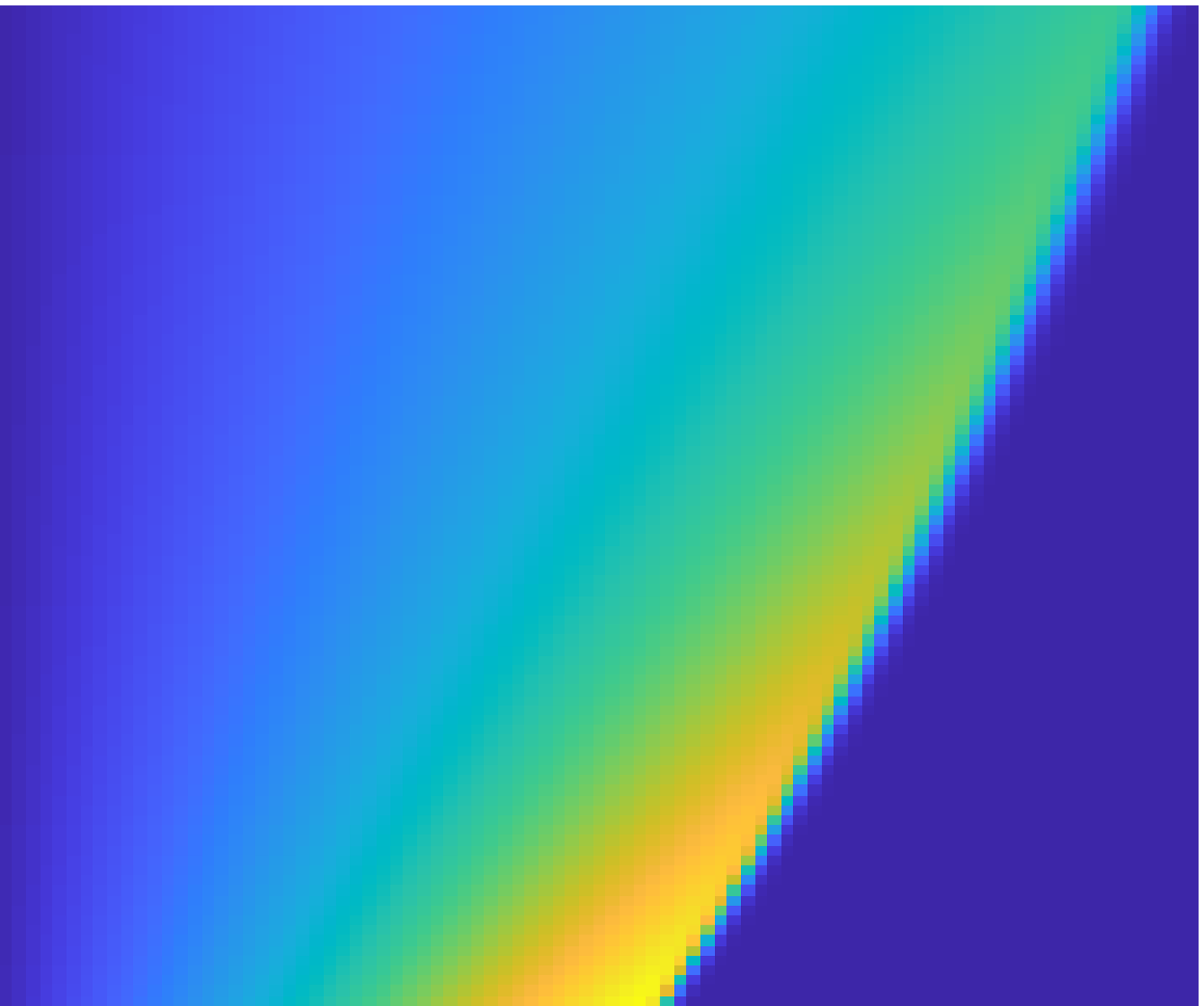};
			\end{groupplot}
		\end{tikzpicture}
		\caption{snapshot data}
		\label{fig:Burgers:FOM}
	\end{subfigure}\hfill
	\begin{subfigure}[t]{.32\linewidth}
		\centering
		\begin{tikzpicture}
			\pgfplotsset{width=5cm,height=5cm}
			\begin{groupplot}[group style={group size=1 by 1, horizontal sep=2.5cm}]
				\nextgroupplot[enlargelimits=false,axis on top=false,xlabel=$x$,ylabel=$t$,ylabel style={rotate=-90},xtick=\empty,ytick=\empty]
				\addplot graphics [xmin=0,xmax=1,ymin=0,ymax=1]{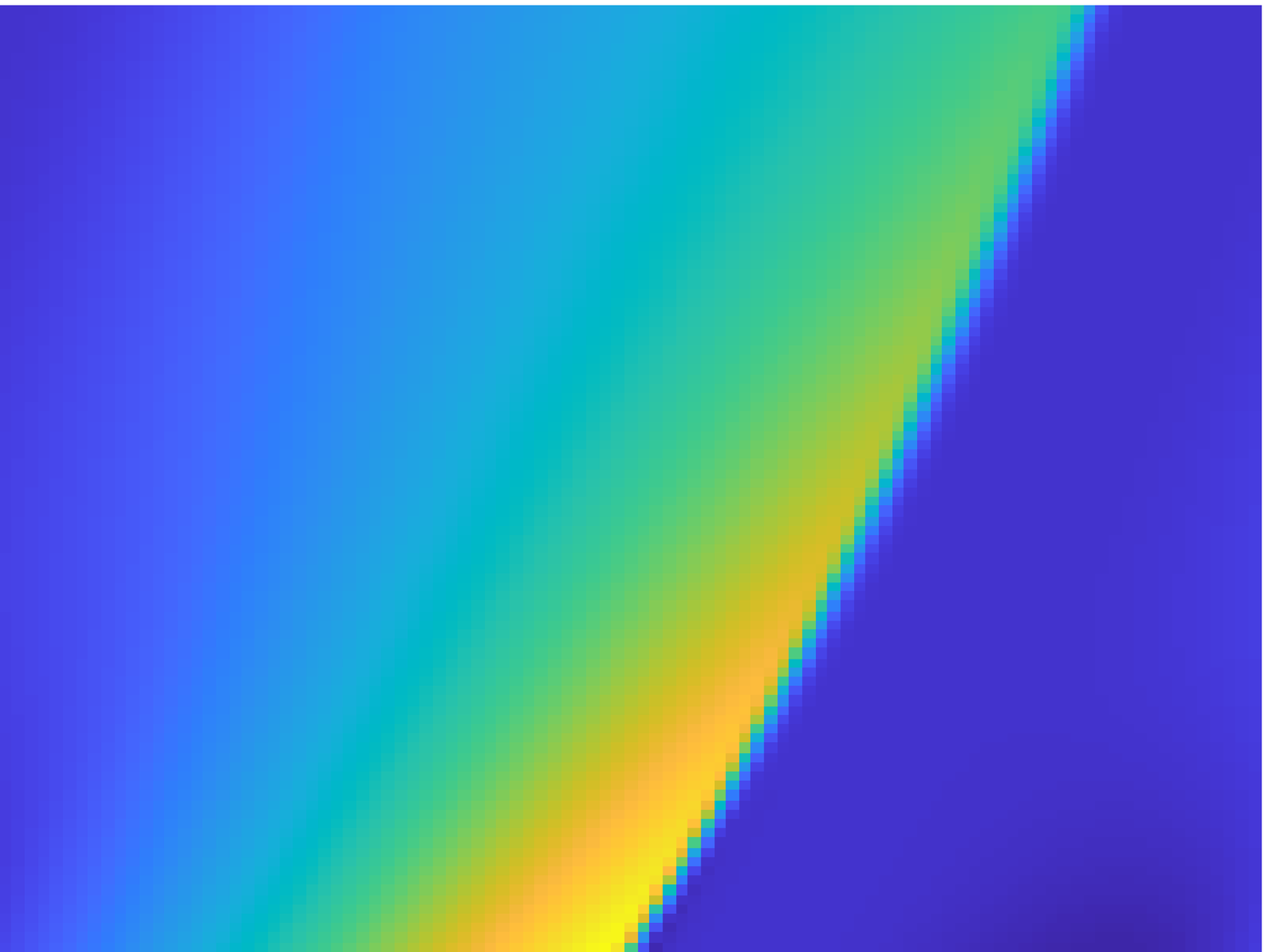};
				\input{burgers_path_optimized}
			\end{groupplot}
		\end{tikzpicture}
		\caption{sPOD $\ROMdim=2$}
		\label{fig:Burgers:sPODapprox}
	\end{subfigure}\hfill
	\begin{subfigure}[t]{.32\linewidth}
		\centering
		\begin{tikzpicture}
			\pgfplotsset{width=5cm,height=5cm}
			\begin{groupplot}[group style={group size=1 by 1, horizontal sep=2.5cm}]
				\nextgroupplot[enlargelimits=false,axis on top=false,xlabel=$x$,ylabel=$t$,ylabel style={rotate=-90},xtick=\empty,ytick=\empty]
				\addplot graphics [xmin=0,xmax=1,ymin=0,ymax=1]{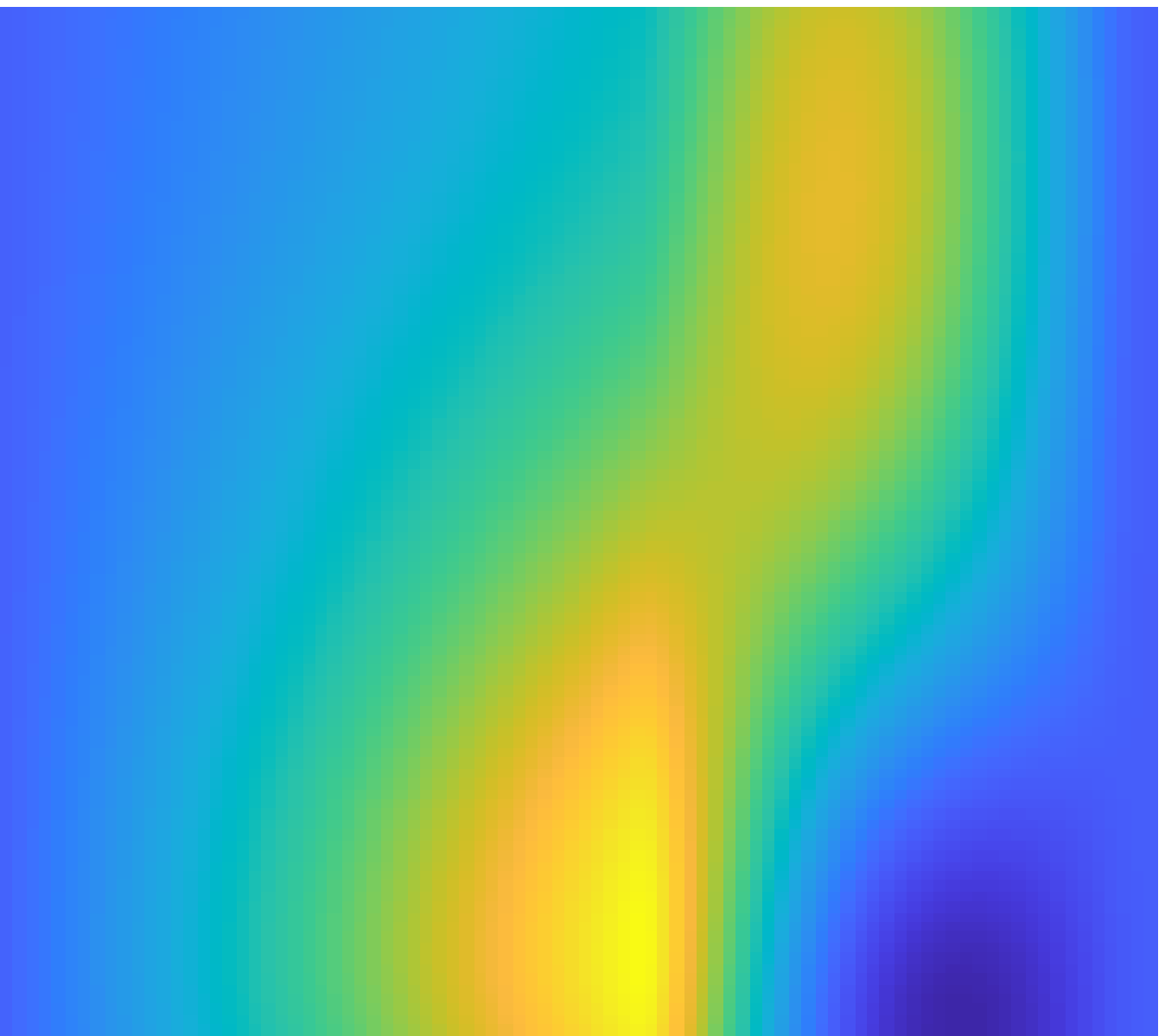};
			\end{groupplot}
		\end{tikzpicture}
		\caption{POD $\ROMdim=2$}
		\label{fig:Burgers:PODapprox}
	\end{subfigure}
	\caption{Burgers' equation -- comparison of original data and approximations }
	\label{fig:result_burgers}
\end{figure}
Besides, we compare the relative $L^2$ errors of the approximations for different mode numbers. For the
initialization of the optimization algorithm, we use the
first $\ROMdim$ snapshots. The coefficients and the path for the optimization are initialized as before. The resulting errors are presented in Table~\ref{tab:comparison_burgers}, detailing the superior approximation capabilities of our method for this test case.
\begin{table}
	\centering
	\caption{Burgers' equation -- comparison of relative $L^2$ errors}
	\label{tab:comparison_burgers}
	\begin{tabular}{l@{\hspace{2em}}r@{\hspace{2em}}r}
		\toprule
		$\ROMdim$ & sPOD & POD \\
		\midrule
		\num{1} & \num{1.217e-1} & \num{4.499e-1} \\
		\num{2} & \num{2.887e-2} & \num{2.853e-1} \\
		\num{3} & \num{1.312e-2} & \num{2.102e-1} \\
		\num{4} & \num{8.406e-3} & \num{1.654e-1} \\
		\num{5} & \num{6.565e-3} & \num{1.347e-1} \\
		\bottomrule
	\end{tabular}
\end{table}

\subsection{Nonlinear Schrödinger equation}
In this section, we consider the nonlinear Schrödinger equation
\begin{align*}
	\mathrm{i} \tfrac{\partial}{\partial t} z(x,t) &= -\tfrac{1}{2} \tfrac{\partial^2}{\partial x^2} z(t,x) + \kappa \vert z(t,x) \vert^{2} z(t,x), \\
	z(0,t) &= 2 \sech (x+7) \exp(2 \mathrm{i} x) + 2 \sech(x-7) \exp(-2 \mathrm{i} x),
\end{align*}
as presented in \cite[Example 4]{MenBALK20}. We compute a solution using the code from \cite{MenBALK20} on a uniform
grid of $501\times 1024$ points on the domain $\mathbb{T} \times \Omega =  [0,2\pi]\times [-15, 15]$. The absolute value
of the numerical solution is presented in Figure~\ref{fig:Schroedinger:FOM}.

We initialize our algorithm by assuming an approximation with two frames, each with two modes. As
starting value for the modes, we use
\begin{align*}
	\mode_{\rho,1}(x) = 2 \sech(x - (-1)^\rho 7) \exp(-(-1)^\rho 2 \mathrm{i} x)\qquad\text{and}\qquad
	\mode_{\rho,2} \equiv 0
\end{align*}
for $\rho=1,2$.
The coefficients are initialized as constants, with value $1$ at each time point. 
For the initial paths, we use $p_1(t) = 2t$ and $p_2(t) = -2t$. 
The corresponding approximation and the absolute error are presented in Figures~\ref{fig:Schroedinger:sPODapprox_linear} and~\ref{fig:Schroedinger:error_linear}.
We observe that the error
results mainly from the complicated wave dynamics in the middle of the spatial and time domain,
whereas the transported wave profiles are captured accurately. Let us emphasize that the error is very localized such that it can be captured with only a few additional POD modes.
\begin{figure}
	\centering
	\begin{subfigure}[t]{.32\linewidth}
		\centering
		\begin{tikzpicture}
			\pgfplotsset{width=5cm,height=5cm}
			\begin{groupplot}[group style={group size=1 by 1, horizontal sep=2.5cm}]
				\nextgroupplot[enlargelimits=false,axis on top=false,xlabel=$x$,ylabel=$t$,ylabel style={rotate=-90},xtick=\empty,ytick=\empty]
				\addplot graphics [xmin=0,xmax=1,ymin=0,ymax=1]{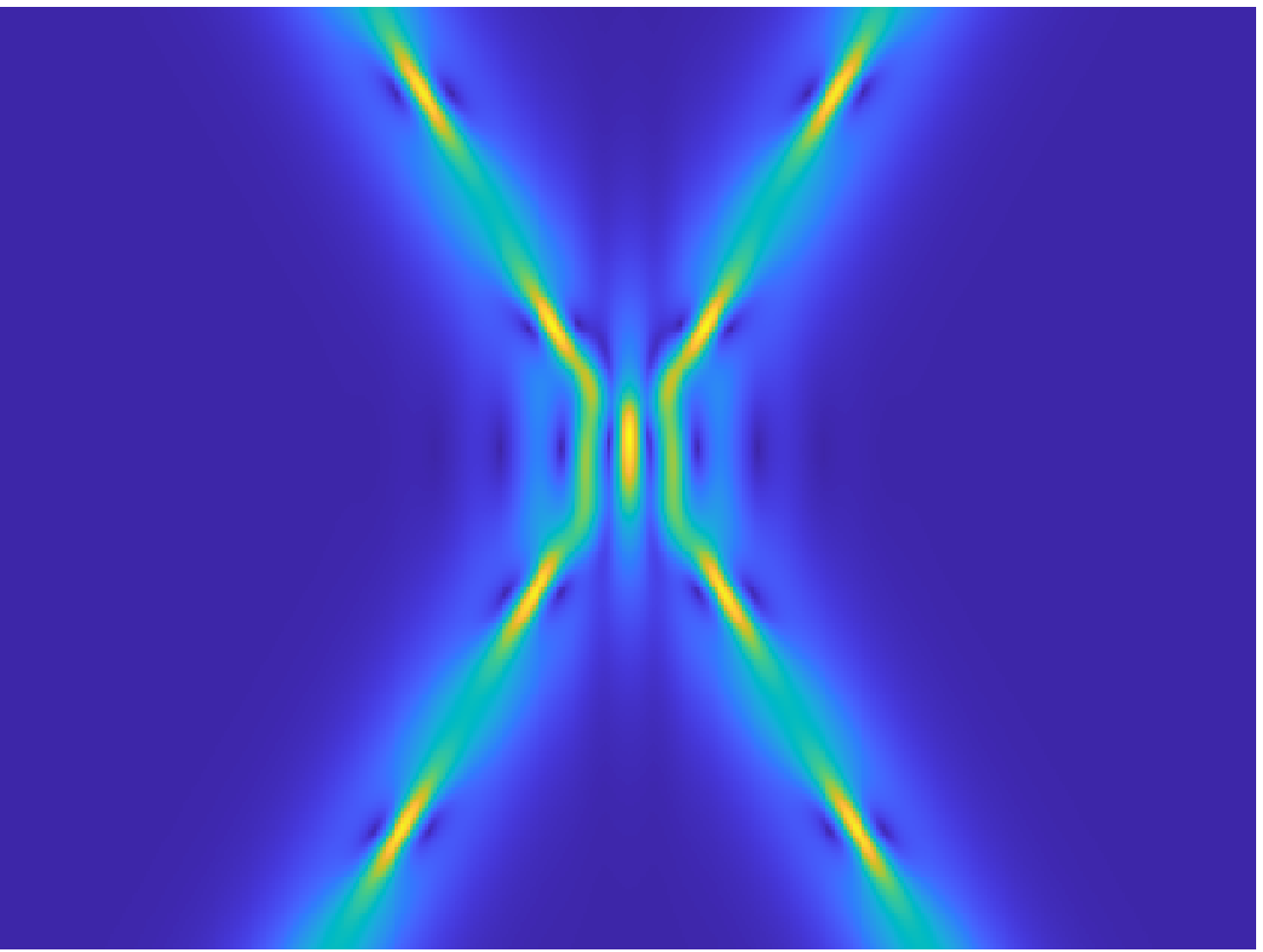};
			\end{groupplot}
		\end{tikzpicture}
		\caption{snapshot data}
		\label{fig:Schroedinger:FOM}
	\end{subfigure}\hfill
	\begin{subfigure}[t]{.32\linewidth}
		\centering
		\begin{tikzpicture}
			\pgfplotsset{width=5cm,height=5cm}
			\begin{groupplot}[group style={group size=1 by 1, horizontal sep=2.5cm}]
				\nextgroupplot[enlargelimits=false,axis on top=false,xlabel=$x$,ylabel=$t$,ylabel style={rotate=-90},xtick=\empty,ytick=\empty]
				\addplot graphics
				[xmin=0,xmax=1,ymin=0,ymax=1]{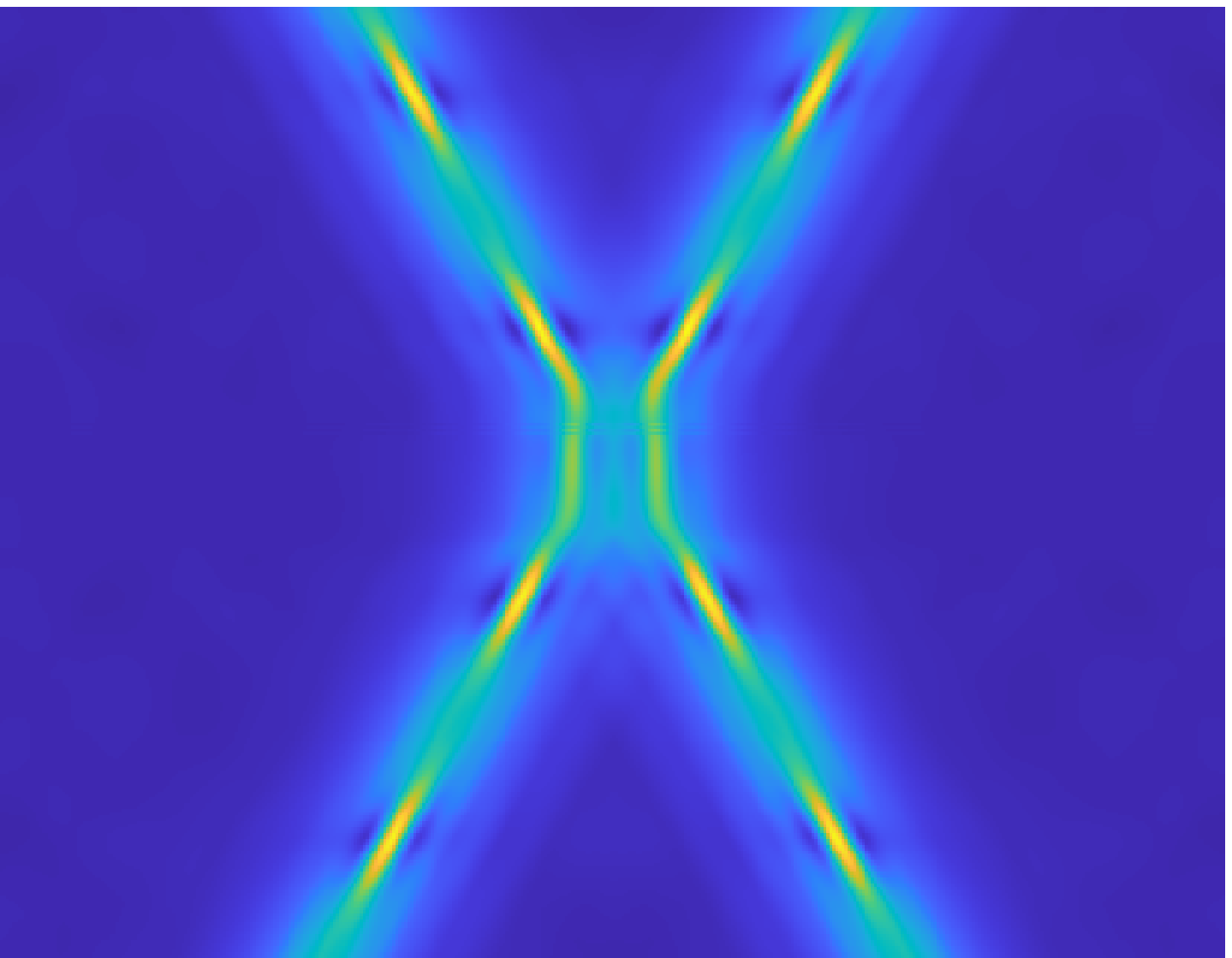};
				\input{schroedinger_linear_path_optimized}
			\end{groupplot}
		\end{tikzpicture}
		\caption{sPOD $\ROMdim=4$}
		\label{fig:Schroedinger:sPODapprox_linear}
	\end{subfigure}\hfill
	\begin{subfigure}[t]{.32\linewidth}
		\centering
		\begin{tikzpicture}
			\pgfplotsset{width=5cm,height=5cm}
			\begin{groupplot}[group style={group size=1 by 1, horizontal sep=2.5cm}]
				\nextgroupplot[enlargelimits=false,axis on top=false,xlabel=$x$,ylabel=$t$,ylabel style={rotate=-90},xtick=\empty,ytick=\empty]
				\addplot graphics
				[xmin=0,xmax=1,ymin=0,ymax=1]{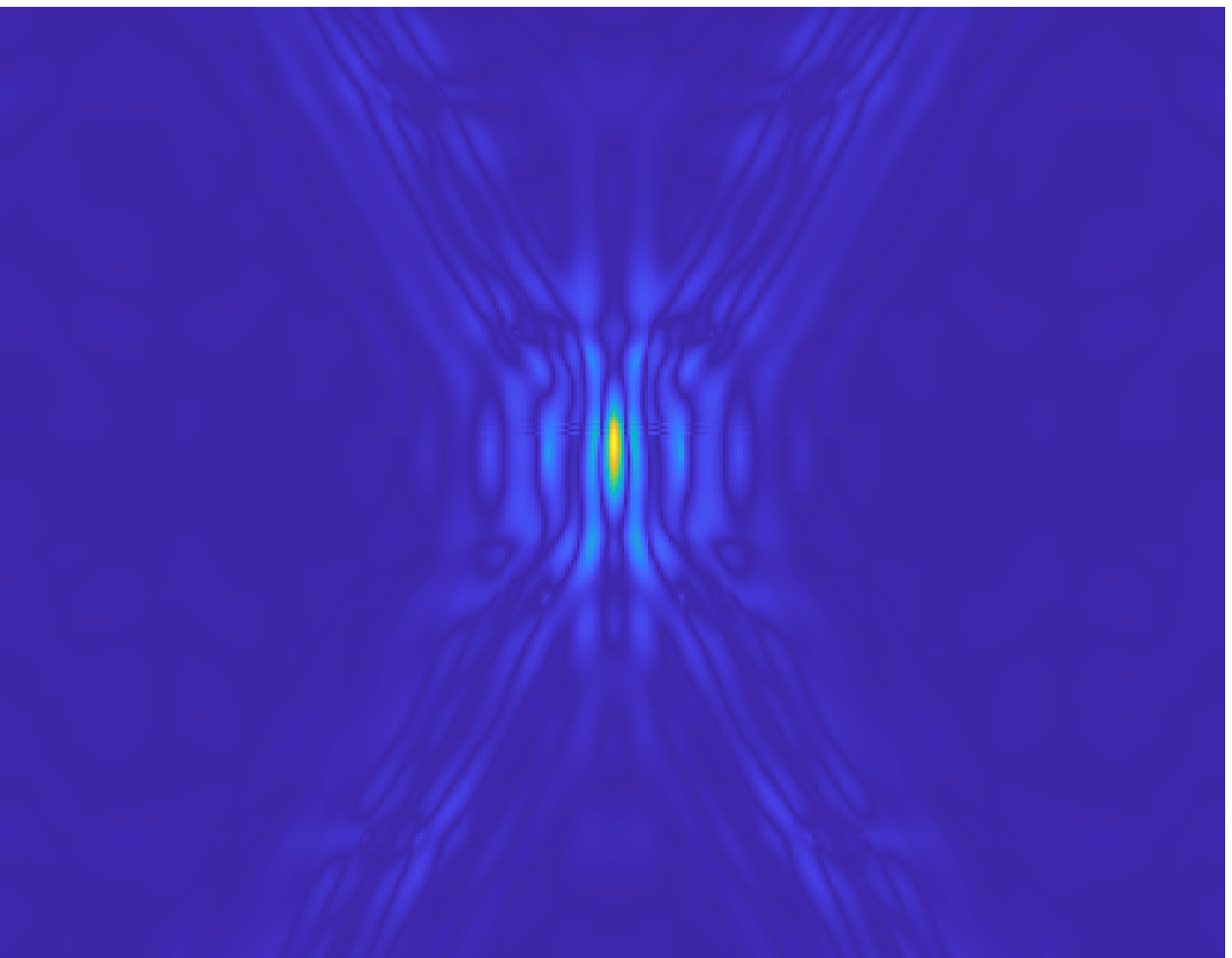};
			\end{groupplot}
		\end{tikzpicture}
		\caption{absolute error}
		\label{fig:Schroedinger:error_linear}
	\end{subfigure}
	\caption{Nonlinear Schrödinger equation -- original data, approximation, and absolute error for initial paths $p_1(t) = 2t$ and $p_2(t) = -2t$}
	\label{fig:Schroedinger}
\end{figure}
We notice that the optimizer does not keep the linear path over the whole time domain.  Instead, as depicted in Figure~\ref{fig:Schroedinger:sPODapprox_linear}, in the
middle of the computational domain, the paths jump between the wavefronts. Inspecting the snapshot matrix in the co-moving frame along the path $p_1(t) = 2t$ in Figure~\ref{fig:schroedinger_ref_frame_1} provides a possible explanation: the vertical wavefront features an offset after the two waves have crossed. The optimizer needs to
account for this offset, which explains the jump. 
\begin{figure}
	\centering
	\begin{tikzpicture}
		\pgfplotsset{width=12cm,height=5cm}
			\begin{groupplot}[group style={group size=1 by 1, horizontal sep=2.5cm}]
				\nextgroupplot[enlargelimits=false,axis on top=false,xlabel=$x$,ylabel=$t$,ylabel style={rotate=-90},xtick=\empty,ytick=\empty]
				\addplot graphics
				[xmin=0,xmax=1,ymin=0,ymax=1]{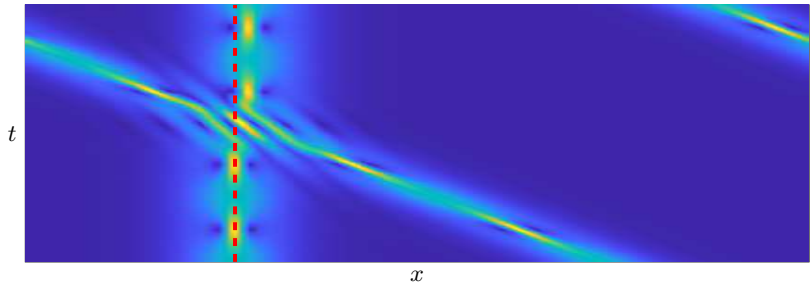};
				\input{schroedinger_linear_path_ref_frame_1_ref_line}
			\end{groupplot}
		\end{tikzpicture}
		\caption{Nonlinear Schrödinger equation -- transformation to the co-moving frame along $p_1(t) = 2t$}
		\label{fig:schroedinger_ref_frame_1}
	\label{fig:schroedinger_ref_frame}
\end{figure}
Let us emphasize that with a different initialization for the path variables, the optimizer finds another local minimum with a similar approximation quality. Using piecewise linear paths as depicted in Figure~\ref{fig:schroedinger_piecewise_initial}, we observe that the
resulting optimized path smoothes out the edges of the initial path in the middle of the domain (cf.~Figure~\ref{fig:schroedinger_piecewise_initial_zoom}) and does not feature any jumps. 
It is smooth and tracks the wavefronts as if the waves reflect off each other.% in the middle of the domain.
\begin{figure}
	\begin{subfigure}[t]{.32\linewidth}
		\centering
		\begin{tikzpicture}
			\pgfplotsset{width=5cm,height=5cm}
			\begin{groupplot}[group style={group size=1 by 1, horizontal sep=2.5cm}]
				\nextgroupplot[enlargelimits=false,axis on top=false,xlabel=$x$,ylabel=$t$,ylabel style={rotate=-90},xtick=\empty,ytick=\empty]
				\addplot graphics
				[xmin=0,xmax=1,ymin=0,ymax=1]{schroedinger_snapshot_new};
				\input{schroedinger_constructed_path_initial}
			\end{groupplot}
		\end{tikzpicture}
		\caption{initial paths}
		\label{fig:schroedinger_piecewise_initial}
	\end{subfigure} \hfill
	\begin{subfigure}[t]{.32\linewidth}
		\centering
		\begin{tikzpicture}
			\pgfplotsset{width=5cm,height=5cm}
			\begin{groupplot}[group style={group size=1 by 1, horizontal sep=2.5cm}]
				\nextgroupplot[enlargelimits=false,axis on top=false,xlabel=$x$,ylabel=$t$,ylabel style={rotate=-90},xtick=\empty,ytick=\empty]
				\addplot graphics
				[xmin=0,xmax=1,ymin=0,ymax=1]{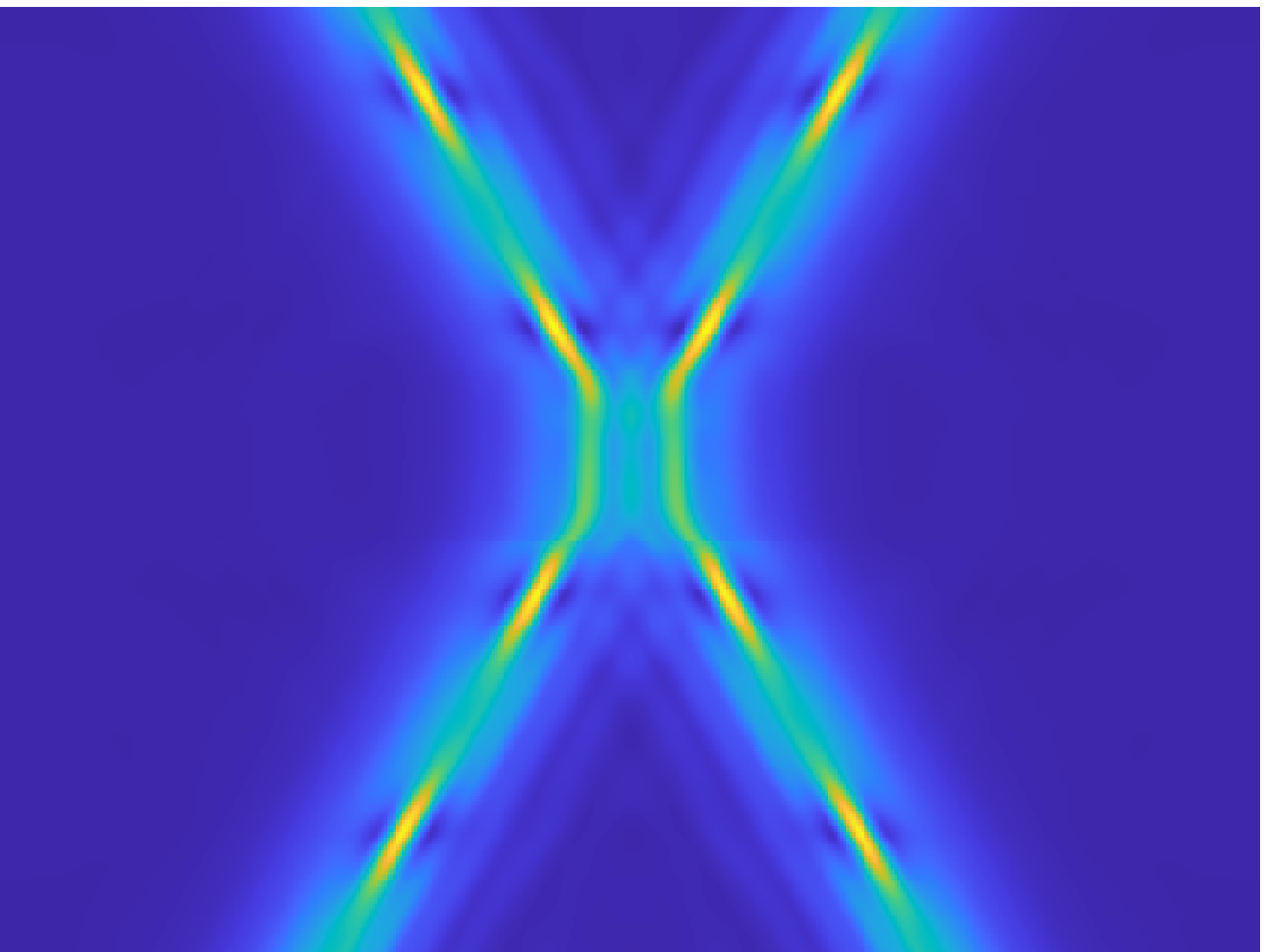};
				\input{schroedinger_constructed_path_optimized}
			\end{groupplot}
		\end{tikzpicture}
		\caption{sPOD $\ROMdim=4$}
		\label{fig:Schroedinger:sPODapprox_constructed}
	\end{subfigure} \hfill
	\begin{subfigure}[t]{.32\linewidth}
		\centering
		\begin{tikzpicture}
			\pgfplotsset{width=5cm,height=5cm}
			\begin{groupplot}[group style={group size=1 by 1, horizontal sep=2.5cm}]
				\nextgroupplot[enlargelimits=false,axis on top=false,xlabel=$x$,ylabel=$t$,ylabel style={rotate=-90},xtick=\empty,ytick=\empty]
				\addplot graphics
				[xmin=0.4,xmax=0.6,ymin=0.3,ymax=0.71]{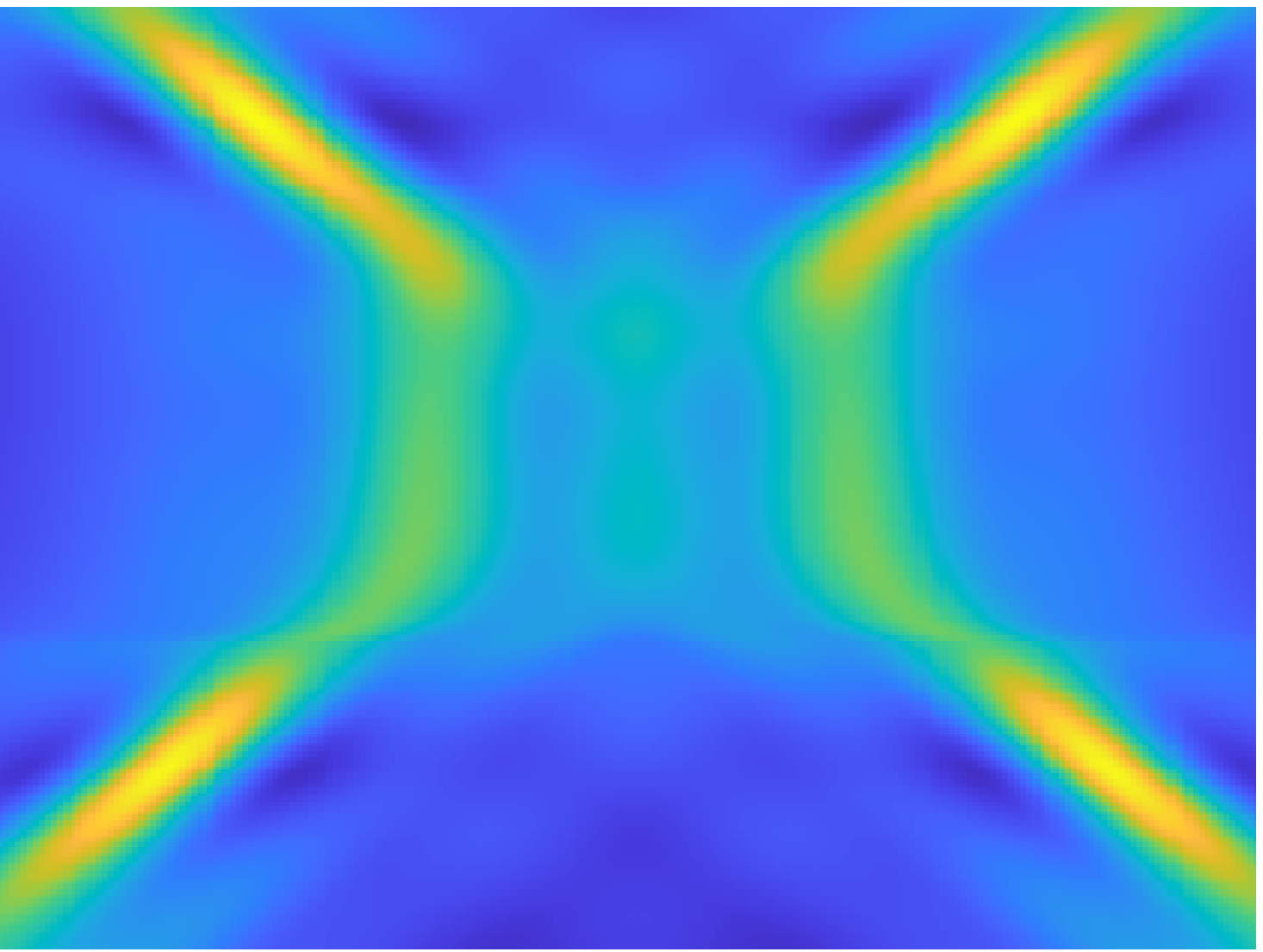};
				\input{schroedinger_constructed_path_initial_zoom_new}
				\input{schroedinger_constructed_path_optimized_zoom_new}
			\end{groupplot}
		\end{tikzpicture}
		\caption{zoom: initial vs.~optimized path}
		\label{fig:schroedinger_piecewise_initial_zoom}
	\end{subfigure}
	\caption{Nonlinear Schrödinger equation -- original data and approximation for a piecewise linear initialization of the path variables as depicted in Figure~\ref{fig:schroedinger_piecewise_initial}}
	\label{fig:Schroedinger:piecewiseInitial}
\end{figure}

\subsection{FitzHugh--Nagumo wave train}

We follow \cite{Koc21} and consider the FitzHugh--Nagumo model given by
\begin{equation}
	\label{eqn:FitzHughNagumo}
	\begin{aligned}
		\tfrac{\partial}{\partial t} u(t,x) &= \nu \tfrac{\partial^2}{\partial x^2}u(t,x)-v(t,x)+u(t,x)(1-u(t,x))(u(t,x)-a),\\
		\tfrac{\partial}{\partial t} v(t,x) &=\epsilon(bu(t,x)-v(t,x)),
	\end{aligned}
\end{equation}
with spatial domain $(0,500)$ and time interval $(0,1000)$. 
The partial differential equation \eqref{eqn:FitzHughNagumo} is closed by periodic boundary conditions and the initial condition
\begin{equation*}
	u(0,x) = \tfrac12\left(1+\sin\left(\tfrac{\pi}{50}x\right)\right),\quad v(0,x) = \tfrac12\left(1+\cos\left(\tfrac{\pi}{50}x\right)\right).
\end{equation*}
For the parameter values, we choose $\nu=1$, $a=-0.1$, $\epsilon=0.05$, and $b=0.3$.
The spatial discretization of \eqref{eqn:FitzHughNagumo} is performed via a central sixth-order finite-difference scheme with mesh width $h=0.5$ and for the time integration we use \matlab's \texttt{ode45} function based on a time grid with step size $1$.
The corresponding numerical solution for the variable $u$ is depicted in Figure~\ref{fig:FitzHughNagumo:FOM}.

For the optimization we consider only the data of the variable $u$ and use an approximation with one reference frame to account for the traveling wave train.
Furthermore, we reduce the computational complexity by considering the optimization problem only in terms of the path, whereas the coefficients and modes are computed in each iteration via a truncated singular value decomposition of the snapshot matrix shifted into the co-moving reference frame.
Here we exploit that the periodic shift operator is isometric, such that we can solve the optimization problem via classical POD with transformed data, cf.~\cite[Thm.~4.8]{BlaSU20}.
As starting value for the path, we choose a linear function in $t$ with slope $1.04$, which we determined by inspecting the first and the last snapshot of the original data.
The corresponding approximation obtained from the optimization procedure is depicted in Figure~\ref{fig:FitzHughNagumo:sPODapprox}.
As reference approximation, we consider a POD approximation with the same number of modes in Figure~\ref{fig:FitzHughNagumo:PODapprox}.
The corresponding total relative errors are $15\%$ for the approximation based on shifted modes and $31\%$ for the POD approximation.
We note that in contrast to the Burgers test case considered in section~\ref{sec:Burgers}, the traveling wave train can be better approximated by POD due to the lack of a traveling shock wave.
Correspondingly, the difference between the POD approximation and the approximation based on shifted modes for the considered FitzHugh--Nagumo test case is less striking than the one observed for the example in section~\ref{sec:Burgers}.

\begin{figure}
	\centering
	\begin{subfigure}[t]{.32\linewidth}
		\centering
		\begin{tikzpicture}
			\pgfplotsset{width=5cm,height=5cm}
			\begin{groupplot}[group style={group size=1 by 1, horizontal sep=2.5cm}]
				\nextgroupplot[enlargelimits=false,axis on top=false,xlabel=$x$,ylabel=$t$,ylabel style={rotate=-90},xtick=\empty,ytick=\empty]
				\addplot graphics [xmin=0,xmax=500,ymin=0,ymax=1000]{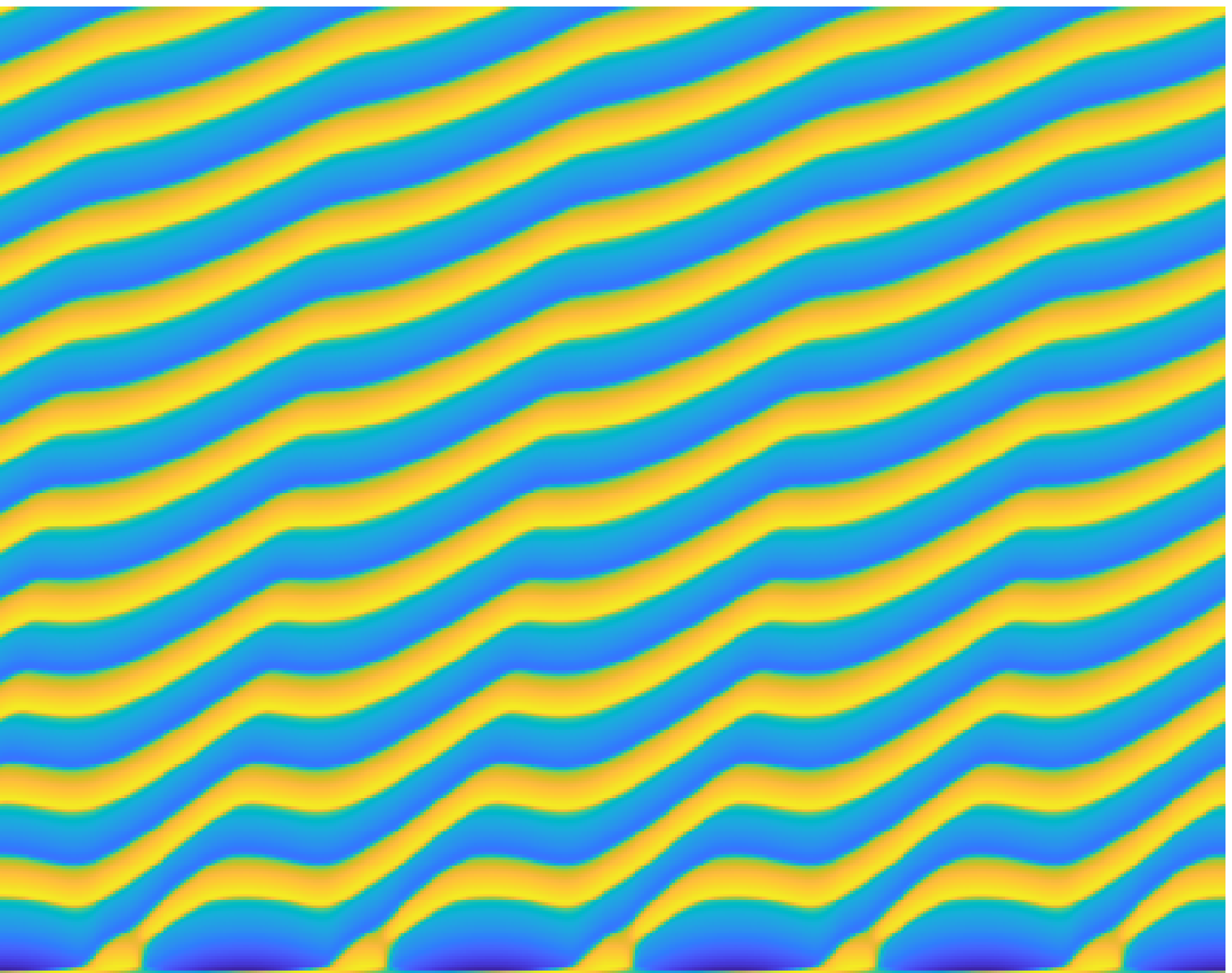};
			\end{groupplot}
		\end{tikzpicture}
		\caption{snapshot data}
		\label{fig:FitzHughNagumo:FOM}
	\end{subfigure}\hfill
	\begin{subfigure}[t]{.32\linewidth}
		\centering
		\begin{tikzpicture}
			\pgfplotsset{width=5cm,height=5cm}
			\begin{groupplot}[group style={group size=1 by 1, horizontal sep=2.5cm}]
				\nextgroupplot[enlargelimits=false,axis on top=false,xlabel=$x$,ylabel=$t$,ylabel style={rotate=-90},xtick=\empty,ytick=\empty]
				\addplot graphics [xmin=0,xmax=500,ymin=0,ymax=1000]{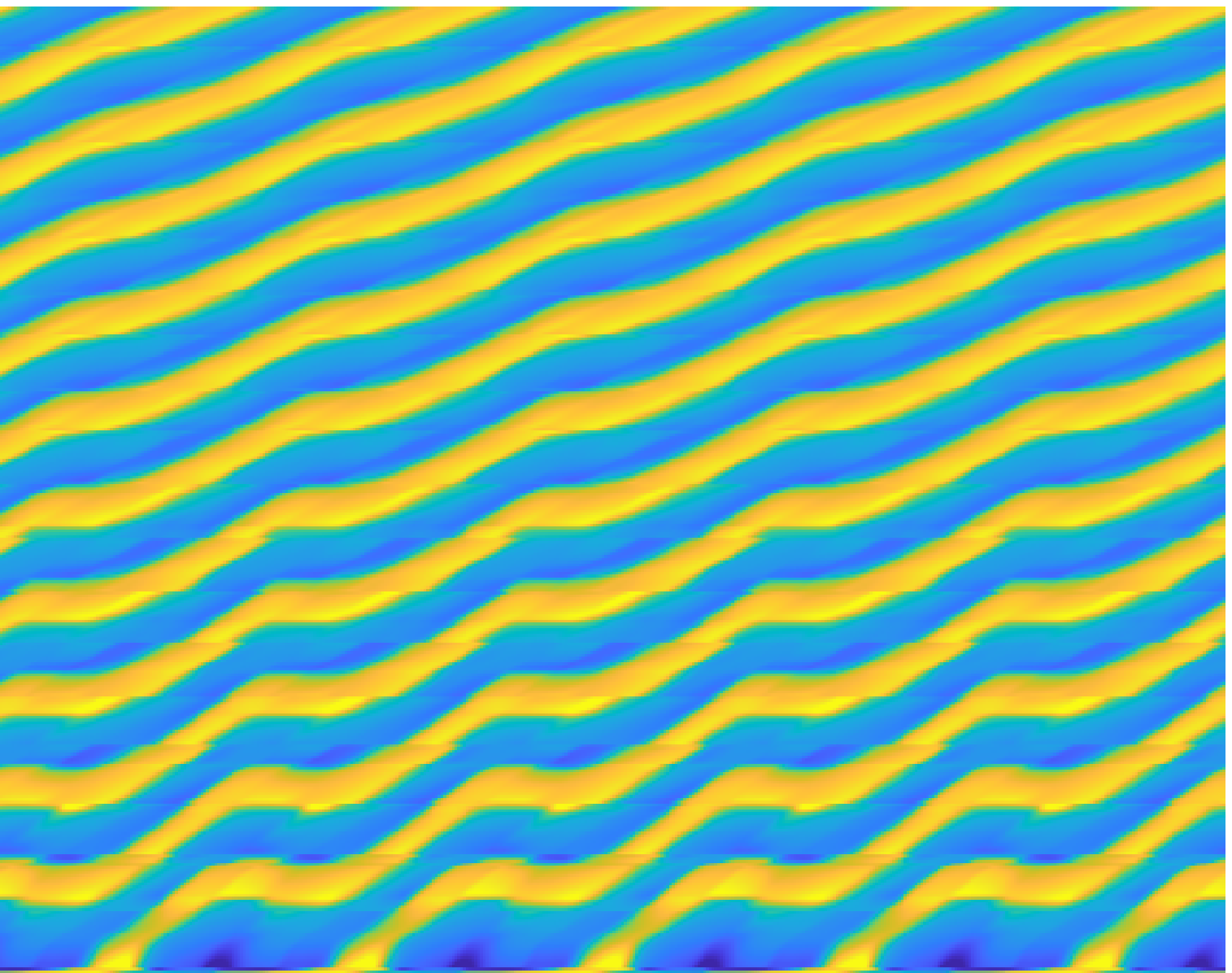};
				\input{FitzHughNagumo_path}
			\end{groupplot}
		\end{tikzpicture}
		\caption{sPOD $\ROMdim=4$}
		\label{fig:FitzHughNagumo:sPODapprox}
	\end{subfigure}\hfill
	\begin{subfigure}[t]{.32\linewidth}
		\centering
		\begin{tikzpicture}
			\pgfplotsset{width=5cm,height=5cm}
			\begin{groupplot}[group style={group size=1 by 1, horizontal sep=2.5cm}]
				\nextgroupplot[enlargelimits=false,axis on top=false,xlabel=$x$,ylabel=$t$,ylabel style={rotate=-90},xtick=\empty,ytick=\empty]
				\addplot graphics [xmin=0,xmax=500,ymin=0,ymax=1000]{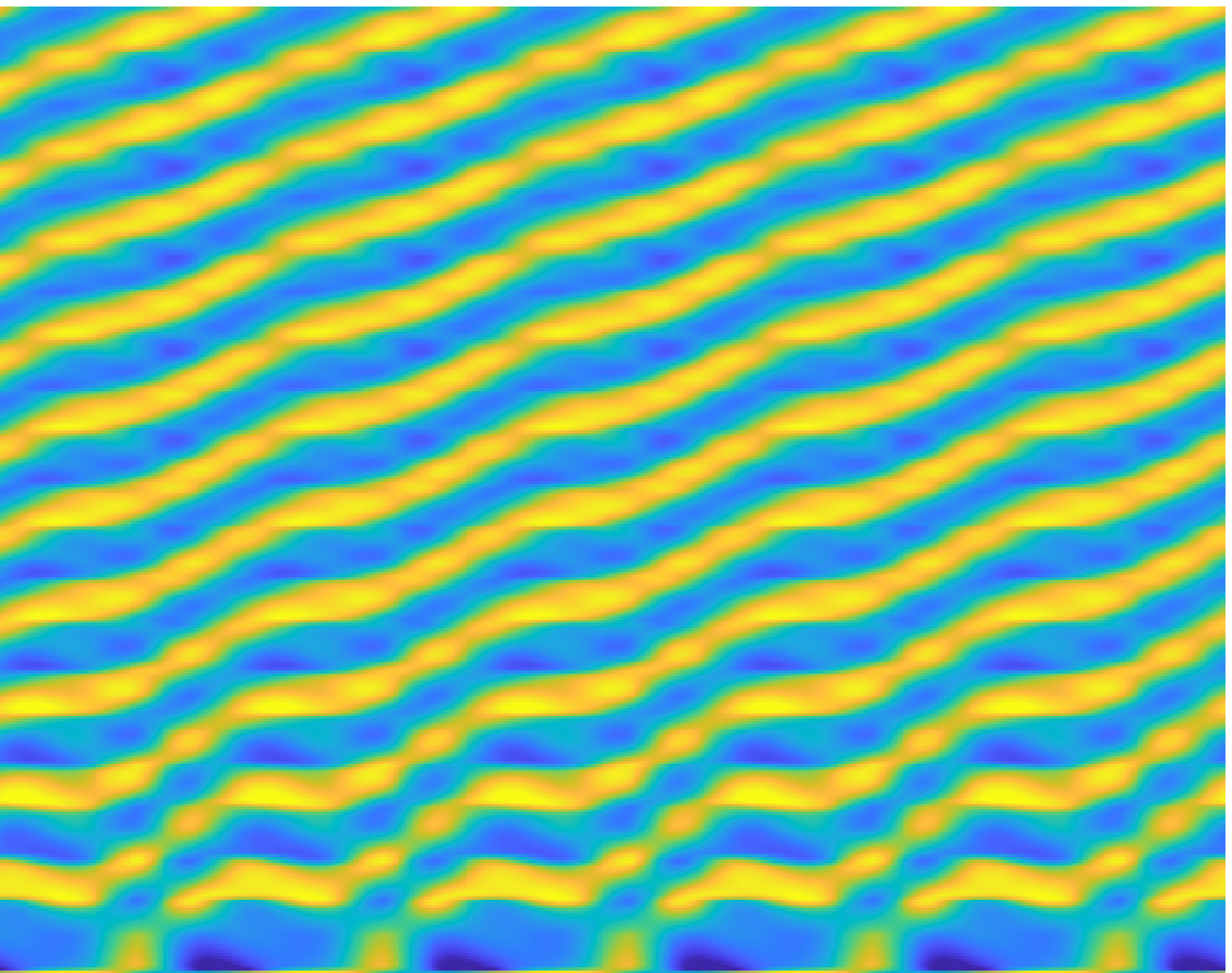};
			\end{groupplot}
		\end{tikzpicture}
		\caption{POD $\ROMdim=4$}
		\label{fig:FitzHughNagumo:PODapprox}
	\end{subfigure}
	\caption{FitzHugh--Nagumo model -- comparison of original data and approximations}
	\label{fig:FitzHughNagumo}
\end{figure}

\section{Summary}
In this paper, we analyze the problem of determining an optimal approximation of given snapshot data by a linear combination of dynamically transformed modes.
This data compression can, for instance, be used for model order reduction of transport-dominated systems, see, for example, \cite{BlaSU20} for an approach that makes use of such decompositions to construct dynamical ROMs via projection.
As optimization parameters, we consider the modes, the corresponding coefficients or amplitudes, and the so-called path variables, which parameterize the coordinate transforms applied to the modes.
We first show that the considered infinite-dimensional optimization problem possesses a minimizing solution if the admissible set is constrained such that the optimization parameters are norm bounded.
Afterward, we derive a corresponding unconstrained optimization problem by adding an appropriate penalization term and
show that the unconstrained problem also has a solution.
Furthermore, we demonstrate that if the penalization coefficient tends to infinity, each limit point of the corresponding sequence of minimizers is a solution to the original constrained optimization problem.
To derive a gradient-based optimization procedure, we compute the partial Fréchet derivatives of the unconstrained cost functional and discuss their space and time discretization.
Finally, we apply the optimization procedure to some numerical test cases and observe that the optimized decompositions
are significantly more accurate than corresponding approximations obtained by the classical proper orthogonal
decomposition with the same number of modes. 

After full discretization, the optimization problem still features a large number of optimization parameters scaling with the number of grid points in space and time.
Thus, an interesting future research direction is to investigate approaches for reducing the computational complexity of
the optimization procedure, for instance, by using multigrid optimization techniques \cite{Nas00}, or by making use of low-dimensional parametrizations of the optimization parameters, cf.~Remark~\ref{rem:pathParameterization}.
Such a parametrization seems to be especially promising for reducing the complexity in the path variables since our numerical experiments revealed that the optimization procedure is sensitive with respect to the paths.
Furthermore, let us emphasize that, although we have only discussed applications in a one-dimensional spatial domain with a periodic shift operator, our framework is not restricted to this case. Thus, another promising direction for the future is to explore the applicability to problems with higher-dimensional spatial domains using different transformation operators, see e.g.~\cite{KraSR21,RimPM20,Tad20} for some contributions in this direction.
 
\paragraph{Acknowledgments}
The work of F.~Black is supported by the Deutsche Forschungsgemeinschaft (DFG, German Research Foundation) Collaborative Research Center (CRC) 1029 \emph{Substantial efficiency increase in gas turbines through direct use of coupled unsteady combustion and flow dynamics}, project number 200291049. 
P.~Schulze acknowledges funding by the DFG CRC Transregio 154 \emph{Mathematical Modelling, Simulation and Optimization Using the Example of Gas Networks}, project number 239904186. 
B.~Unger acknowledges funding from the DFG under Germany's Excellence Strategy -- EXC 2075 -- 390740016 and is thankful for support by the Stuttgart Center for Simulation Science (SimTech).

\bibliographystyle{plain}
\bibliography{literature}

\appendix

\section{Shifted Inner Products of Hat Functions}
\label{sec:shiftedInnerProducts}

In this section, we explicitly compute the path-dependent inner products for a particular example.  Let us assume a one-dimensional domain $\Omega=(0,1)$, which we discretize with an equidistant grid with step size $h\vcentcolon=\tfrac{1}{n}$ for some $n\in\N$.  We consider the spaces $\BS = L^2(\Omega)$ and $\BSS = H^1_{\mathrm{per}}(\Omega)$.  
For the discretization of $\BSS$ we choose periodic $P_1$ finite elements, which are given as
\begin{align*}
	&\psi_\ell(x) = 
	\begin{cases}
		\frac{x-(\ell-1)h}h 	& \text{for } x\in ((\ell-1)h,\ell h],\\
		\frac{(\ell+1)h-x}h 	& \text{for } x\in (\ell h,(\ell+1)h),\\
		0 						& \text{otherwise},
	\end{cases}
	\quad \text{for } \ell = 1,\ldots,n-1,\\
	\text{and}\quad
	&\psi_n(x) = 
	\begin{cases}
		\frac{x-(n-1)h}h 	& \text{for } x\in ((n-1)h,nh],\\
		\frac{h-x}h 		& \text{for } x\in (0,h),\\
		0 						& \text{otherwise},
	\end{cases}
\end{align*}
with derivatives
\begin{align*}
	&\psi^\prime_k = 
	\begin{cases}
		\frac1h 		& \text{for } x\in ((k-1)h,kh),\\
		-\frac1h 	& \text{for } x\in (kh,(k+1)h),\\
		0 				& \text{otherwise},
	\end{cases}
	\quad \text{for } k = 1,\ldots,n-1,\\
	\text{and }
	&\psi^\prime_n = 
	\begin{cases}
		\frac1h 		& \text{for } x\in ((n-1)h,nh),\\
		-\frac1h		& \text{for } x\in (0,h),\\
		0 				& \text{otherwise}.
	\end{cases}
\end{align*}
For the family of transformation operators we choose the shift operator with periodic embedding, cf.~Example~\ref{ex:counterExampleDirectionalDerivative}, i.e., $\transform_i(\path_i)\mode_i = \mode_i(\cdot-\path_i)$.

\begin{lemma}
	Consider the settings as described above. Then for $i,j=1,\ldots,\ROMdim$, $\mode_i\in\BSS$, and $\path_i\in\R$ we have
	\begin{gather*}
		 \tfrac{\partial}{\partial \path_i} \transform_i(\path_i)\mode_i = -\transform_i(\path_i)\tfrac{\partial}{\partial x} \mode_i, \qquad M_{i,j}(\path_i,\path_j) = F_i(\path_i-\path_j),\\
		 N_{i,j}(\path_i,\path_j) = G_i(\path_i-\path_j),
	\end{gather*}
	with $M_{i,j}$, $N_{i,j}$, $F_i$, and $G_i$ defined in~\eqref{eqn:pathDependentInnerProducts}.
\end{lemma}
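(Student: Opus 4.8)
The plan is to reduce all three identities to two structural properties of the periodic shift operator, both of which are classical: (i) it forms a strongly continuous one-parameter group of isometries on $\BS = L^2(\Omega)$, with group law $\transform(a)\transform(b) = \transform(a+b)$ and adjoint $\transform(\path)^{\ast} = \transform(-\path)$; and (ii) its generator is $-\tfrac{\partial}{\partial x}$ with domain $\BSS = H^1_{\mathrm{per}}(\Omega)$, as already recalled in Example~\ref{ex:counterExampleDirectionalDerivative}. I would also note at the outset that in the present setting all transformation operators $\transform_i$ coincide with this single shift operator, which I denote by $\transform$, and that the periodic hat functions $\psi_\ell$ (being continuous, piecewise affine, and periodic) lie in $\BSS$, with weak derivatives $\psi_\ell' \in L^2(\Omega)$.

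First I would prove $\tfrac{\partial}{\partial \path_i}\transform_i(\path_i)\mode_i = -\transform_i(\path_i)\tfrac{\partial}{\partial x}\mode_i$. For $\mode_i \in \BSS$, i.e.\ $\mode_i$ in the domain of the generator, standard semigroup theory gives that $\path_i \mapsto \transform(\path_i)\mode_i$ is continuously differentiable as a map into $\BS$ with derivative $\transform(\path_i)\bigl(-\tfrac{\partial}{\partial x}\mode_i\bigr)$, and since translation commutes with differentiation on the torus this equals $-\tfrac{\partial}{\partial x}\transform(\path_i)\mode_i$. Equivalently, one shows by hand that the difference quotient $\varepsilon^{-1}\bigl(\mode_i(\cdot-\path_i-\varepsilon) - \mode_i(\cdot-\path_i)\bigr)$ converges in $L^2(\Omega)$ to $-\mode_i'(\cdot-\path_i)$, which is the usual mean-square continuity estimate for $H^1$ functions. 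Making this limit precise — in particular identifying the $\BS$-valued derivative with $-\transform(\path_i)\partial_x\mode_i$ — is the one genuinely analytic step; the remaining two identities are then pure algebra.

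Next I would compute, entrywise for fixed $k,\ell$, using property~(i),
\begin{align*}
	\bigl[M_{i,j}(\path_i,\path_j)\bigr]_{k,\ell}
	&= \langle \transform(\path_j)\psi_k, \transform(\path_i)\psi_\ell\rangle_{\BS}
	= \langle \psi_k, \transform(\path_j)^{\ast}\transform(\path_i)\psi_\ell\rangle_{\BS}\\
	&= \langle \psi_k, \transform(\path_i-\path_j)\psi_\ell\rangle_{\BS}
	= \bigl[F_i(\path_i-\path_j)\bigr]_{k,\ell},
\end{align*}
which establishes $M_{i,j}(\path_i,\path_j) = F_i(\path_i-\path_j)$. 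For the last identity I would combine this isometry argument with the first identity: by the latter, the right argument of the inner product defining $N_{i,j}$ is $-\transform(\path_i)\psi_\ell'$, so that
\begin{align*}
	\bigl[N_{i,j}(\path_i,\path_j)\bigr]_{k,\ell}
	&= \bigl\langle \transform(\path_j)\psi_k, -\transform(\path_i)\psi_\ell'\bigr\rangle_{\BS}
	= -\langle \psi_k, \transform(\path_i-\path_j)\psi_\ell'\rangle_{\BS}\\
	&= \Bigl\langle \psi_k, \tfrac{\partial}{\partial \path}\transform(\path)\psi_\ell\big|_{\path = \path_i-\path_j}\Bigr\rangle_{\BS}
	= \bigl[G_i(\path_i-\path_j)\bigr]_{k,\ell},
\end{align*}
where $\psi_\ell' \in L^2(\Omega)$ is used to make the first inner product well defined. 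Collecting these identities over all $k,\ell$ gives the asserted matrix equalities, and together with the first identity the lemma is proved. The only nontrivial ingredient, as noted, is the differentiability of the shifted mode in $L^2$; everything else uses only the group and isometry structure of the periodic shift and the definitions in~\eqref{eqn:pathDependentInnerProducts}.
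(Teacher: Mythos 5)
Your proposal is correct and rests on exactly the same three facts the paper's one-line proof invokes --- that the periodic shift is unitary (so $\transform(\path)^{\ast}=\transform(-\path)$), that it is a (semi)group, and that its generator is $-\tfrac{\partial}{\partial x}$ --- with the entrywise computations for $M_{i,j}$ and $N_{i,j}$ being the routine consequences the paper leaves implicit. No substantive difference in approach; you have simply written out the details.
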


\begin{proof}
	The statement is a mere consequence of the fact that the shift operator is unitary and a semi-group and $-\tfrac{\partial}{\partial x}$ is its generator, see for instance~\cite[Sec.~II.2.10]{EngN00}.  \qed
\end{proof}

It is thus sufficient to compute the inner products for $F_i$ and $G_i$. We first observe that for $\path_i = qh$ for some $q\in\Z$ we obtain
\begin{align*}
	\transform_i(\path_i)\psi_\ell = \psi_\ell(\cdot-qh) = \psi_{\ell+q} = \transform_i(0)\psi_{\ell+q}
\end{align*}
with the understanding that for $\ell+q\not\in\{1,\ldots,n\}$ we use $((\ell+q-1)\mod n)+1$ instead. 
It is thus sufficient to compute the inner products for $\path_i\in[0,h)$.

\begin{lemma}
	Consider the setting as described above. Then for $\path_i\in[0,h)$ we obtain
	\begin{align*}
		\resizebox{.95\linewidth}{!}{$
		\langle \psi_k,\transform_i(\path_i)\psi_\ell\rangle = \begin{cases}
			\tfrac{1}{h^2}\left(\tfrac{2}{3}(h-\path_i)^3+\path_i(h-\path_i)^2+\path_ih(h-\path_i)+\tfrac16\path_i^3\right), & \text{if } \ell = k,\\
			 \tfrac{1}{6h^2}(h-\path_i)^3, & \text{if } \ell = k+1,\\
			 \tfrac{1}{h^2}\left(\tfrac{1}{6}(h-\path_i)^3 - \tfrac{1}{3}\path_i^3 + h^2\path_i\right),  & \text{if } \ell = k-1,\\			 
			 \tfrac{1}{6h^2} \path_i^3, & \text{if } \ell = k-2,\\
			 0, & \text{otherwise},
		\end{cases}$}
	\end{align*}
	and
	\begin{align*}
		\langle \psi_k,\tfrac{\partial}{\partial \path_i} \transform_i(\path_i)\psi_\ell\rangle = \begin{cases}
			-\tfrac{1}{h^2}\left(2\path_i h-\tfrac{3}{2}\path_i^2\right), & \text{if } \ell=k,\\
			-\tfrac{1}{2h^2}(h-\path_i)^2, & \text{if } \ell=k+1,\\
			-\tfrac{1}{h^2}\left(2\path_i^2-2h\path_i-\tfrac{1}{2}(h-\path_i)^2\right), & \text{if } \ell=k-1,\\
			\tfrac{1}{2h^2}\path_i^2, & \text{if } \ell=k-2,\\
			0, & \text{otherwise.}
		\end{cases}
	\end{align*}
\end{lemma}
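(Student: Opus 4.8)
The plan is to exploit the locality and the translation structure of the hat functions. Since the $L^2$ inner product on $\BS = L^2(\Omega)$ is translation invariant and, up to the periodic index identification already discussed, every $\psi_\ell$ is a translate of a single reference hat by $\ell h$, the quantity $\langle \psi_k,\transform_i(\path_i)\psi_\ell\rangle = \langle \psi_k,\psi_\ell(\cdot-\path_i)\rangle$ depends only on $m\vcentcolon=\ell-k$ and on $\path_i$; hence it suffices to fix $k$ and let $m$ vary. Because $\transform_i(\path_i)\psi_\ell$ is supported on the translate by $\path_i$ of the interval of length $2h$ on which $\psi_\ell$ lives, and $0\le \path_i<h$, the supports of $\psi_k$ and $\psi_\ell(\cdot-\path_i)$ overlap precisely when $m\in\{-2,-1,0,1\}$; for all other $m$ the inner product vanishes, which accounts for the last (``otherwise'') case. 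Since only indices with $|m|\le 2$ occur, the periodic embedding is irrelevant for the value of each integral and enters only through the index identification.

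For each of the four remaining cases I would write the overlap interval explicitly and subdivide it at the (at most two) interior kinks of $\psi_k$ and of $\psi_\ell(\cdot-\path_i)$, so that on every subinterval both factors are affine. On such a subinterval the integral of a product of two affine functions is elementary (e.g.\ via the Simpson rule, which is exact for quadratics); after a shift of the form $u=x-kh-\path_i$ one gets, in each case, a cubic polynomial in $h$ and $\path_i$, and summing the subinterval contributions and collecting terms yields the stated expressions. As a consistency check one verifies that $\path_i=0$ recovers the classical $P_1$ mass-matrix entries $\tfrac{2h}{3},\ \tfrac{h}{6},\ \tfrac{h}{6},\ 0$, and that for $m=-2$ the overlap has length $\path_i$, consistent with the factor $\tfrac{1}{6h^2}\path_i^3$ vanishing at $\path_i=0$.

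For the derivative formula I would use that, by Assumption~\ref{ass:TransformationDerivative} (satisfied by the periodic shift operator on $H^1_{\mathrm{per}}$), the map $\path_i\mapsto\transform_i(\path_i)\psi_\ell$ is continuously differentiable into $\BS$, so that $\langle \psi_k,\tfrac{\partial}{\partial\path_i}\transform_i(\path_i)\psi_\ell\rangle = \tfrac{\partial}{\partial\path_i}\langle \psi_k,\transform_i(\path_i)\psi_\ell\rangle$; differentiating the four cubic polynomials just obtained gives the claimed quadratics directly. Alternatively, one can recompute it from scratch using the previous lemma's identity $\tfrac{\partial}{\partial\path_i}\transform_i(\path_i)\psi_\ell = -\transform_i(\path_i)\psi_\ell'$ together with the fact that $\psi_\ell'$ is piecewise constant equal to $\pm\tfrac1h$, so that the relevant integrals reduce to integrals of a single hat function over intervals of length $\path_i$ and $h-\path_i$.

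The only real difficulty is bookkeeping: tracking which affine piece of each factor is active on which subinterval and how the subdivision points depend on $\path_i$. There is no conceptual obstacle, so I would organize the four cases in a single table (overlap interval, subdivision points, the two affine pieces per subinterval, the resulting integral) to keep the computation transparent. One may also note the structure behind the case distinction: as functions of $\path_i$ these inner products are samples of the autocorrelation of the linear B-spline, hence piecewise cubic with breakpoints at integer multiples of $h$, which is exactly why only $\ell-k\in\{-2,-1,0,1\}$ contributes within a single cell $\path_i\in[0,h)$.
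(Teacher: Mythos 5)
Your proposal is correct and follows essentially the same route as the paper: identify the support overlap (nonzero only for $\ell-k\in\{-2,-1,0,1\}$), split each overlap at the kinks so both factors are affine, and evaluate the elementary integrals; for the derivative the paper uses your second alternative, namely $\tfrac{\partial}{\partial \path_i}\transform_i(\path_i)\psi_\ell=-\transform_i(\path_i)\psi_\ell'$ with $\psi_\ell'$ piecewise constant. Your first alternative (differentiating the four cubics under the inner product, justified by Assumption~\ref{ass:TransformationDerivative}) is a valid and slightly quicker shortcut that reproduces the stated quadratics.
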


\begin{proof}
	We immediately obtain $\langle \psi_k,\transform_i(\path_i)\psi_\ell\rangle_{\BS} = 0$ and $\langle \psi_k,\tfrac{\partial}{\partial \path_i}\transform_i(\path_i)\psi_\ell\rangle_{\BS} = 0$ for $k=1,\ldots,n$ and $\ell\not\in\{k-2,k-1,k,k+1\}$. 
	For $k=1,\ldots,n$ we compute
	\begin{align*}
		\langle \psi_k&,\transform_i(\path_i)\psi_k\rangle_{\BS} = \int_0^1 \psi_k(x)\psi_k(x-\path_i) \dx\\
		&= \int_{(k-1)h+\path_i}^{k h} \psi_k(x)\psi_k(x-\path_i) \dx + \int_{k h}^{k h+\path_i} \psi_k(x)\psi_k(x-\path_i) \dx\\
		&\phantom{=}\quad  + \int_{k h+\path_i}^{(k+1)h} \psi_k(x)\psi_k(x-\path_i) \dx\\
		&=\int_0^{h-\path_i} \tfrac{x(x+\path_i)}{h^2}\dx + \int_0^{\path_i} \tfrac{(x+h-\path_i)(h-x)}{h^2}\dx - \int_{\path_i-h}^0 \tfrac{(\path_i-x)x}{h^2} \dx\\
		&= \tfrac{1}{h^2}\left(\tfrac{2}{3}(h-p_i)^3+p_i(h-p_i)^2+p_ih(h-p_i)+\tfrac16p_i^3\right).
	\end{align*}
	Similarly, we obtain
	\begin{align*}
		\langle \psi_k,\transform_i(\path_i)\psi_{k+1}\rangle_{\BS} &= \int_{k h+\path_i}^{(k+1)h} \psi_k(x)\psi_{k+1}(x-\path_i)\dx\\
		&= \tfrac{1}{h^2}\int_0^{h-\path_i} (h-\path_i-x)x \dx = \tfrac{1}{6h^2}(h-\path_i)^3.
	\end{align*}
	Furthermore, we have
	\begin{align*}
		\langle \psi_k&,\transform_i(\path_i)\psi_{k-1}\rangle_{\BS}\\
		&= \int_{(k-1)h}^{(k-1)h+\path_i} \psi_{k}(x)\psi_{k-1}(x-\path_i)\dx + \int_{(k-1)h+\path_i}^{k h} \psi_{k}(x)\psi_{k-1}(x-\path_i)\dx\\
		&\phantom{=}\qquad + \int_{k h}^{k h + \path_i} \psi_{k}(x)\psi_{k-1}(x-\path_i)\dx\\
		&= \int_0^{\path_i} \tfrac{(x+h-\path_i)x}{h^2}\dx + \int_0^{h-\path_i} \tfrac{(h-x)(x+\path_i)}{h^2}\dx - \int_{-\path_i}^0 \tfrac{x(h-\path_i-x)}{h^2}\dx\\
		&= \tfrac{1}{h^2}\left(\tfrac{1}{6}(h-\path_i)^3-\tfrac{1}{3}\path_i^3+h^2\path_i\right).
	\end{align*}
	In addition, we obtain
	\begin{align*}
		\langle \psi_k,\transform_i(\path_i)\psi_{k-2}\rangle_{\BS} &= \int_{(k-1)h}^{(k-1)h+\path_i} \psi_k(x)\psi_{k-2}(x-\path_i)\dx\\
		&= \tfrac{1}{h^2}\int_0^{\path_i} (\path_i-x)x\dx = \tfrac{1}{6h^2}\path_i^3.
	\end{align*}
	For the derivatives, we obtain for suitable $k\in\{1,\ldots,n\}$
	\begin{align*}
		\langle \psi_k,\tfrac{\partial}{\partial \path_i}\transform_i(\path_i)\psi_k\rangle_{\BS} 
			&= -\int_0^1 \psi_k(x)\psi_k'(x-\path_i)\dx\\
			&= -\int_{\path_i}^h \tfrac{x}{h^2} \dx + \int_{-h}^{\path_i-h} \tfrac{x}{h^2} \dx - \int_{\path_i-h}^0 \tfrac{x}{h^2} \dx\\
			&= -\tfrac1{h^2}\left(2\path_ih-\tfrac32p_i^2\right),\\
		\langle \psi_k,\tfrac{\partial}{\partial \path_i}\transform_i(\path_i)\psi_{k+1}\rangle_{\BS} 
			&=  -\int_{kh+p_i}^{(k+1)h} \psi_k(x)\psi_k'(x-\path_i)\dx = \int_{\path_i-h}^0 \tfrac{x}{h^2}\dx\\
			&= -\tfrac{1}{2h^2}(h-\path_i)^2,\\
		\langle \psi_k,\tfrac{\partial}{\partial \path_i}\transform_i(\path_i)\psi_{k-1}\rangle_{\BS} 
			&= \int_0^{\path_i} -\tfrac{x}{h^2}\dx + \int_{\path_i}^{h} \tfrac{x}{h^2} \dx - \int_{-h}^{\path_i-h} \tfrac{x}{h^2}\dx\\
			&= -\tfrac1{h^2}\left(2\path_i^2-2h\path_i-\tfrac12(h-\path_i)^2\right),\\
		\langle \psi_k,\tfrac{\partial}{\partial \path_i}\transform_i(\path_i)\psi_{k-2}\rangle_{\BS} 
			&= \int_{(k-1)h}^{(k-1)h+p_i} \tfrac{x-(k-1)h}{h^2} \dx = \tfrac1{h^2} \int_0^{p_i} x\dx = \tfrac1{2h^2}\path_i^2,
	\end{align*}
	which concludes the proof.\qed
\end{proof}

\end{document}